\documentclass[11pt]{article}
\usepackage[margin=1.1in]{geometry}
\usepackage{amsmath,amssymb,amsthm}
\usepackage{microtype}
\usepackage{needspace}
\usepackage[colorlinks=true,linkcolor=blue,citecolor=blue,urlcolor=blue]{hyperref}
\usepackage{tikz}
\usepackage{float}
\usepackage{graphicx}
\usepackage{caption}
\usetikzlibrary{positioning,calc}
\newtheorem{theorem}{Theorem}[section]
\newtheorem{lemma}[theorem]{Lemma}
\newtheorem{proposition}[theorem]{Proposition}
\newcommand{\ZZ}{\mathbb Z}
\newcommand{\Om}{\Omega}
\newcommand{\cB}{\mathcal B}
\title{Multicolor Ramsey numbers of unbalanced double stars\footnote{Center for Discrete Mathematics, Fuzhou University,
Fuzhou 350108, P.~R.~China. Email: {\tt 1539166573@qq.com}. }}
\author{Lin Niu}
\date{}
\hypersetup{pdftitle={Multicolor Ramsey numbers of unbalanced double stars},
 pdfauthor={Lin Niu},
 pdfsubject={Exact multicolor Ramsey numbers of unbalanced double stars},
 pdfkeywords={Ramsey number, double star, edge-coloring, monochromatic component}}
\begin{document}
\maketitle
\begin{abstract}
For integers $n\ge m\ge1$, let $S(n,m)$ be the double star obtained
by joining the centers of $K_{1,n}$ and $K_{1,m}$.
Ruotolo and Song proved the upper bound
$r(S(n,m);k)\le kn+m+2$
under the condition
$(n+1)\lceil\frac{n+1}{k-1}\rceil
>m((k-1)n+m)$,
as well as the matching lower bound for odd $k$.
Zhao, Mao and Zhou proved the matching lower bound
for even $k$ and even $n$.
We establish the remaining lower bound for even $k\ge6$
and odd $n$ through a new edge-coloring construction.
Combining these results, we obtain
$r(S(n,m);k)=kn+m+2$
for all integers $k\ge5$ and $n\ge m\ge1$ satisfying
$(n+1)\lceil\frac{n+1}{k-1}\rceil
>m((k-1)n+m)$.
We also determine the four-color Ramsey number for
$m\ge2$ and odd $n\ge15m$, showing that
$r(S(n,m);4)=4n+m+1$.

\medskip
\noindent\textbf{Keywords:} Ramsey number; double star; edge-coloring.
\end{abstract}

\section{Introduction}\label{sec:intro}
All graphs are finite, simple, and undirected. Copies need not be induced.
For a graph $G$, write $V(G)$ and $E(G)$ for its vertex and edge sets,
and $N_G(v)$ and $d_G(v)$ for the neighborhood and degree of $v$.
Put $N_G[v]=N_G(v)\cup\{v\}$.
Let $G[X]$ denote the subgraph induced by $X\subseteq V(G)$.
A $k$-edge-coloring is a map from $E(G)$ to $[k]=\{1,\ldots,k\}$.
The $k$-color Ramsey number $r(H;k)$ is the least integer $N$ such that
every $k$-edge-coloring of $K_N$ contains a monochromatic copy of $H$.

For $n\ge m\ge1$, the double star $S(n,m)$ is formed by joining
the centers of disjoint stars with $n$ and $m$ leaves.
It has $n+m+2$ vertices and maximum degree $n+1$.
Grossman, Harary and Klawe~\cite{GHK} initiated the two-color problem
and determined exact values in several ranges.
Norin, Sun and Zhao~\cite{NSZ} disproved their general conjecture.
Further two-color bounds were obtained by Flores Dub\'o and Stein~\cite{FDS}.

Burr and Roberts~\cite{BR} determined the multicolor Ramsey numbers of stars.
Irving~\cite{Irving} studied multicolor Ramsey numbers of small graphs,
including $S(1,1)=P_4$.
Gy\'arf\'as and S\'ark\"ozy~\cite{GS} obtained bounds on the order of a largest
monochromatic double star in a multicolored complete graph.
S\'ark\"ozy~\cite{Sarkozy} later improved these bounds.
DeCamillis and Song~\cite{DCS} studied the corresponding bipartite Ramsey problem.
Bal, DeBiasio and Oren-Dahan~\cite{BDO} obtained bounds for balanced double stars.

For general double stars, Ruotolo and Song proved the following lower bounds.

\begin{theorem}[{Ruotolo and Song~\cite[Theorem~2.1]{RS}}]\label{thm:RS-lower}
Let $k\ge2$ and $n\ge m\ge1$ be integers.
For odd $k$,
\[
 r(S(n,m);k)\ge kn+m+2.
\]
For even $k$,
\[
 r(S(n,m);k)\ge\max\{kn+1,(k-1)n+2m+2\}.
\]
\end{theorem}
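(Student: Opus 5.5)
The plan is to exhibit, for each case, an explicit $k$-edge-coloring of a complete graph on one fewer vertex than the claimed bound with no monochromatic $S(n,m)$. Every construction rests on two sufficient conditions for a color class $F$ to be $S(n,m)$-free. \emph{(a)} If $\Delta(F)\le n$, then $F$ has no vertex of degree $n+1$, so it cannot contain $S(n,m)$. \emph{(b)} Suppose every component of $F$ has at most $n+m+1$ vertices; since $S(n,m)$ is connected on $n+m+2$ vertices, $F$ is again free. A third, mixed condition is also available: whenever $u$ has degree $\ge n+1$ in $F$, each $F$-neighbor $v$ of $u$ has at most $m-1$ neighbors other than $u$. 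The regular pieces will come from standard factorizations. If $N$ is even, $K_N$ has a $1$-factorization. If $N$ is odd, $K_N$ decomposes into $(N-1)/2$ Hamiltonian cycles. Grouping factors therefore gives color classes that are $n$-regular, or have degree at most $n$, whenever the parities allow.

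For even $k$ there are two bounds to prove. For $r(S(n,m);k)\ge kn+1$, colour $K_{kn}$, which has degree $kn-1$. When $kn$ is even, split a $1$-factorization into $k$ groups with $n,\dots,n,n-1$ factors. When $kn$ is odd, use Hamiltonian cycles together with one near-perfect matching, again keeping every color at degree $\le n$. Condition (a) then finishes this case. For the second bound, $(k-1)n+2m+2$, partition $K_{(k-1)n+2m+1}$ into two sets $A,B$ with $|A|,|B|\le n+m+1$ (possible since $(k-1)n+2m+1\le 2(n+m+1)$ would need $k\le 3$, so in general I expect to take several blocks). Give color $1$ to all edges inside blocks of order at most $n+m+1$, which is free by (b). The edges between blocks then form a complete multipartite graph, to be coloured with the remaining $k-1$ colors at degree $\le n$ via (a). The block sizes will be chosen so that each vertex has exactly $(k-1)n$ cross edges, or at most that many. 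Here I will use a factorization of a complete multipartite graph with equal parts, which is regular and has the appropriate $1$-factorizations.

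For odd $k$ the target is a colouring of $K_{kn+m+1}$, and every vertex has degree $kn+m$. Hence at every vertex some color must exceed degree $n$. I plan to give color $1$ the mixed structure described above, or a union of cliques of order $n+m+1$, so that color $1$ absorbs a surplus of $n+m$ or $m$ at every vertex. The remaining $K$-minus-color-$1$ graph should be $(k-1)n$-regular on the remaining structure, split into $k-1$ $n$-regular classes. The first step is to choose color $1$ as a disjoint union of cliques $K_{n+m+1}$ where the vertex count allows, and then to treat the leftover vertices with a bipartite-star pattern. In that pattern a small set $Z$ of size $m$ is joined in color $1$ to vertices whose color-$1$ degree is exactly $m$. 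An edge $zy$ with $\deg y=m$ forces $y$ to have only $m-1$ further neighbors, so no copy arises. Oddness of $k$ is what makes $k-1$ even. I intend to use this to pair colors $2,\dots,k$ and to realize the $(k-1)n$-regular remainder as a union of Hamiltonian-cycle or $1$-factor groups regardless of the parity of $n$.

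The main obstacle is the odd-$k$ construction. Specifically, the difficulty is making the color-$1$ part and the factorizable remainder compatible in vertex count and degree parity, so that every vertex has degree exactly $\le n$ in colors $2,\dots,k$ while color $1$ stays $S(n,m)$-free. I would first try $kn+m+1=(n+m+1)+(k-1)n$: take one clique $K_{n+m+1}$ in color $1$ and handle the other $(k-1)n$ vertices and all cross edges by an equitable decomposition into $n$-regular pieces. If degree counts fail there, I would switch to the $Z$-star variant for color $1$.
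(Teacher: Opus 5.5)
Your $kn+1$ construction for even $k$ is fine. Since $kn$ is even, grouping a $1$-factorization of $K_{kn}$ into classes of $n,\dots,n,n-1$ factors gives every color maximum degree at most $n$.

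\textbf{Odd $k$.} The odd-$k$ plan fails by a degree count.
\begin{itemize}
\item In $K_{kn+m+1}$ every vertex has degree $kn+m$. If colors $2,\dots,k$ have degree at most $n$ everywhere, then color $1$ has minimum degree at least $n+m\ge n+1$.
\item Every color-$1$ edge $uv$ then has $d_1(u)\ge n+1$ and $d_1(v)\ge m+1$. Freeness forces $N_1(v)\subseteq N_1[u]$ with $|N_1[u]|=n+m+1$.
\item So color $1$ must be a disjoint union of copies of $K_{n+m+1}$ covering all vertices. This requires $(n+m+1)\mid(kn+m+1)$, which fails already for $k=3$, $n=m=1$.
\end{itemize}
Your first attempt breaks exactly here. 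A vertex outside the clique $Q$ gets at most $(k-1)n$ from colors $2,\dots,k$, so it still needs $n+m$ in color $1$, and it cannot get any color-$1$ edges from $Q$. The $Z$-star variant is worse: a vertex of color-$1$ degree $m$ would need $kn$ from $k-1$ colors of degree at most $n$.

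The missing idea is that every color needs its own component of order $n+m+1$, with all these components sharing a common set. A construction that works:
\begin{itemize}
\item Take $W$ of size $m+1$ and $U_1,\dots,U_k$ of size $n$, with indices in $\ZZ_k$.
\item Give color $c$ to the edges inside $U_c$ and between $W$ and $U_c$, and color $1$ to the edges inside $W$.
\item Give color $c$ to the edges between $U_i$ and $U_j$ when $i+j\equiv 2c\pmod k$.
\end{itemize}
This blows up the near-$1$-factorization of $K_k$, and that is where oddness of $k$ enters. Then $W\cup U_c$ is a closed color-$c$ component of order $n+m+1$. Each vertex of $U_j$, $j\ne c$, has color-$c$ degree exactly $n$, all coming from $U_{2c-j}$.

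\textbf{Even $k$, the bound $(k-1)n+2m+2$.} The block scheme also fails.
\begin{itemize}
\item With color $1$ inside blocks of order $s\le n+m+1$ and the other $k-1$ colors of degree at most $n$, each vertex needs cross degree $(k-1)n+2m+1-s\le(k-1)n$.
\item So every block size lies in $[2m+1,n+m+1]$, and the sizes must sum to $(k-1)n+2m+1$.
\item This often fails in the range $n\le 2m$, which is the only range where this bound exceeds $kn+1$. For $k=4$ and $n=m=2$, every block must have order $5$ while $N=11$. For $k=2$ and $n\le 2m$, no admissible partition exists at all.
\item Even when admissible sizes exist, the parts are unequal. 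Splitting the resulting complete multipartite graph into $k-1$ classes of maximum degree $n$ is then not automatic; for $n=1$ it is a proper edge-coloring question, which can fail for class-two graphs.
\end{itemize}

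Your $Z$-star pattern is the right tool, but it belongs here rather than in the odd case:
\begin{itemize}
\item Take $Z$ of size $m$ and $Y$ of size $(k-1)n+m+1$.
\item Color every $Z$--$Y$ edge with color $1$, and the edges inside $Z$ with color $2$.
\item Color the complete graph on $Y$ with colors $2,\dots,k$ using the odd construction for $k-1$ colors (a single color when $k=2$).
\end{itemize}
Color $1$ is then $K_{m,|Y|}$, in which every edge has an endpoint of degree $m$. Such an edge cannot join the two centers of an $S(n,m)$, which need degrees at least $n+1$ and $m+1$. Meanwhile $Z$ is a separate color-$2$ component of order $m$, too small to contain $S(n,m)$.
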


They also obtained an upper bound that matches the odd-color lower bound.

\begin{theorem}[{Ruotolo and Song~\cite[Theorem~2.7]{RS}}]\label{thm:RS-upper}
Let $k\ge2$ and $n\ge m\ge1$ be integers. If
\[
 (n+1)\left\lceil\frac{n+1}{k-1}\right\rceil
 >m\bigl((k-1)n+m\bigr),
\]
then $r(S(n,m);k)\le kn+m+2$.
\end{theorem}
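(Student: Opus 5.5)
Here I sketch how I would prove the upper bound of Theorem~\ref{thm:RS-upper}: every $k$-edge-coloring of $K_N$ with $N=kn+m+2$ contains a monochromatic $S(n,m)$. I argue by contradiction. Fix a coloring with no monochromatic $S(n,m)$. Each vertex has degree $N-1=kn+m+1$. By pigeonhole, each vertex $v$ has some color in which its degree is at least $\lceil (kn+m+1)/k\rceil\ge n+1$. Call a vertex \emph{big} in color $i$ if $d_i(v)\ge n+1$.

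The key observation is this. Suppose $uv$ has color $i$, $d_i(u)\ge n+1$ and $d_i(v)\ge m+1$. Suppose also that $u$ and $v$ have few common $i$-neighbours. Then we could choose $n$ leaves at $u$ and $m$ leaves at $v$ disjointly, which gives a monochromatic $S(n,m)$. So for every such edge the number of common neighbours must be large. Precisely, we need $|N_i(u)\cup N_i(v)\setminus\{u,v\}|<n+m$. This forces $d_i(v)\le m$ or a strong overlap of the two neighbourhoods. Summing over colors, a vertex $v$ with $d_i(v)\ge n+1$ has every $i$-neighbour $w$ either of $i$-degree at most $m$, or with $N_i(w)$ nearly contained in $N_i[v]$. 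In particular, the large-degree $i$-neighbours of a big vertex span a structure of order at most about $n+m$.

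Next I count. Let $B_i$ be the set of vertices big in color $i$. Since every vertex lies in some $B_i$, some color, say color $1$, has $|B_1|\ge N/k$. The overlap condition shows that each color-$1$ component containing a big vertex has at most $n+m+1$ vertices. Moreover, inside it, the vertices of large degree form a near-clique. Vertices of such a component send at least $N-(n+m+1)$ edges in colors $2,\dots,k$ to the rest. I then estimate the edges between a big vertex $v$'s small-degree neighbours (degree $\le m$ in color $1$) and the rest, using the other $k-1$ colors. Each small neighbour $w$ has at least $kn+1$ edges in colors $2,\ldots,k$. So $w$ has degree at least $\lceil (kn+1)/(k-1)\rceil\ge n+1$ in some other color. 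Applying the same overlap argument in that color gives a bound on $|N_i(v)|$ in terms of $\lceil (n+1)/(k-1)\rceil$ and $m$.

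The main obstacle is turning this into the exact inequality $(n+1)\lceil\frac{n+1}{k-1}\rceil\le m((k-1)n+m)$, which must hold in any counterexample. My plan is a double count on a big vertex $v$ in color $1$ with its $n+1$ neighbours $w\in N_1(v)$. Each such $w$ has at most $m$ further color-$1$ neighbours outside $N_1[v]$. This holds because $w$ is either small, or, in the big case, its neighbourhood is absorbed. So $N_1(v)$ has at most $m$ edges each to the outside set $U$ of size $N-n-2=(k-1)n+m$. Hence at least $(n+1)\bigl((k-1)n+m\bigr)-m(n+1)$ pairs between $N_1(v)$ and $U$ use colors $2,\dots,k$.

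Symmetrically, each $u\in U$ sends at least $\lceil (n+1)/(k-1)\rceil$ edges of a single color $j\ne1$ into $N_1(v)$. I then count pairs (vertex of $U$, bounded-degree witness) to get at most $m$ per vertex of $U$. Comparing the count $(n+1)\lceil\frac{n+1}{k-1}\rceil$ with $m((k-1)n+m)$ yields the contradiction. I expect the delicate part to be verifying that the big-big edges do not break the "at most $m$" bound. I would handle it first by showing that two adjacent big vertices in a color must have nearly identical closed neighbourhoods of size at most $n+m+1$, and then treating that cluster as a single unit.
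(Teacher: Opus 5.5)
The paper does not prove this statement; it quotes it from Ruotolo and Song. Your sketch therefore has to stand on its own, and it has a genuine gap at exactly the step that should produce the hypothesis.

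\textbf{What works.} Your first half is fine, and simpler than you make it. Take $d_1(v)\ge n+1$, a set $A\subseteq N_1(v)$ with $|A|=n+1$, and $U=V\setminus(A\cup\{v\})$, so $|U|=(k-1)n+m$. Every $w\in A$ has at most $m-1$ color-$1$ neighbours in $U$. Otherwise $v$ (with leaves $A\setminus\{w\}$) and $w$ (with $m$ leaves in $U$) span a color-$1$ $S(n,m)$. You do not need the small/big case split, and you do not need clusters of big vertices. Your claim that every color-$1$ component through a big vertex has at most $n+m+1$ vertices is not justified by the overlap condition, since that condition says nothing about neighbours of degree at most $m$. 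It is also not needed. You do, however, need the sharp bound $m-1$ rather than your $m$, for the reason given below.

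\textbf{Where it fails.} The second half does not work. You assert that every $u\in U$ sends at least $\lceil\frac{n+1}{k-1}\rceil$ edges of a single color $j\ne 1$ into $N_1(v)$. This is unfounded. The color-$1$ bound holds per vertex of $A$, not per vertex of $U$, so nothing in your argument stops a given $u$ from sending many of its edges to $A$ in color $1$ (for $m\ge2$, even all of them). Even if you grant the claim, three further problems remain.
\begin{itemize}
\item Different $u$ would use different colors, so there is no single color to count in.
\item The product you would obtain is $|U|\lceil\frac{n+1}{k-1}\rceil$, not $(n+1)\lceil\frac{n+1}{k-1}\rceil$.
\item Your ``at most $m$ per vertex of $U$'' has no mechanism behind it. Having $m+1$ color-$j$ neighbours of $u$ in $A$ yields an $S(n,m)$ only if one of them has $n$ further color-$j$ neighbours disjoint from the others.
\end{itemize}

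\textbf{The missing idea.} Put both pigeonholes on the $A$ side.
\begin{enumerate}
\item Each $w\in A$ has at least $|U|-(m-1)=(k-1)n+1$ edges to $U$ in colors $2,\dots,k$. Hence there is a color $j_w\ne1$ with at least $n+1$ color-$j_w$ neighbours in $U$. With your bound $m$ you would only get $n$ here, which is not enough.
\item Pigeonholing over the $n+1$ vertices of $A$ gives a color $j$ and a set $A_j=\{w\in A:j_w=j\}$ with $|A_j|\ge\lceil\frac{n+1}{k-1}\rceil$.
\item Each $u\in U$ has at most $m$ color-$j$ neighbours in $A_j$. Suppose $w_0,\dots,w_m$ were such neighbours. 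Then $w_0$, with $n$ leaves in $(N_j(w_0)\cap U)\setminus\{u\}$, and $u$, with leaves $w_1,\dots,w_m$, form a color-$j$ $S(n,m)$, because $A\cap U=\varnothing$.
\item Double counting color-$j$ edges between $A_j$ and $U$ gives $(n+1)\lceil\frac{n+1}{k-1}\rceil\le m((k-1)n+m)$. This contradicts the hypothesis.
\end{enumerate}
Your total count of non-$1$ pairs was the right raw material. It has to be turned into this per-vertex statement on the $A$ side, not into a statement about vertices of $U$.
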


Zhao, Mao and Zhou supplied the matching lower bound when both $k$ and $n$ are even.
Together with Theorem~\ref{thm:RS-upper}, this yields their exact formula.

\begin{theorem}[{Zhao, Mao and Zhou~\cite[Corollary~2.5 and Theorem~1.2]{ZMZ}}]\label{thm:ZMZ-even}
Let $k\ge4$ and $n\ge m\ge1$ be integers, with $k$ and $n$ even. Then
\[
 r(S(n,m);k)\ge kn+m+2.
\]
Equality holds whenever
$(n+1)\lceil(n+1)/(k-1)\rceil>m((k-1)n+m)$.
\end{theorem}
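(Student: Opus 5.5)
The equality clause is immediate once the lower bound is known. Under $(n+1)\lceil (n+1)/(k-1)\rceil>m((k-1)n+m)$, Theorem~\ref{thm:RS-upper} gives $r(S(n,m);k)\le kn+m+2$ for every $k\ge2$. So the whole task is the lower bound: for even $k\ge4$ and even $n$, I need a $k$-edge-coloring of $K_{kn+m+1}$ with no monochromatic $S(n,m)$. I have not found an explicit coloring that I can check; below are a usable criterion, the parity facts it must rest on, and the construction I would try first.

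\textbf{The criterion.} In a color-$c$ graph $G_c$, a copy of $S(n,m)$ exists if and only if there is an edge $uv$ with $d_{G_c}(u)\ge n+1$ and $d_{G_c}(v)\ge m+1$ whose neighborhoods admit disjoint leaf sets. By Hall's theorem this last condition is $|N_{G_c}(u)\cup N_{G_c}(v)|\ge n+m+2$. The coloring will be verified against this criterion, and the following local structures are safe:
\begin{itemize}
\item a color class whose components all have at most $n+m+1$ vertices;
\item an $n$-regular color class, since no vertex has degree $n+1$;
\item a bipartite color class in which one side has degree at most $m$.
\end{itemize}

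\textbf{Parity ingredients.} Each vertex has total degree $kn+m$. If every color had degree at most $n$ at a vertex, the total would be at most $kn$, so every vertex needs $m$ ``excess'' edges placed safely. Evenness of $n$ allows $n$-regular factors on an odd number of vertices: $K_{2t+1}$ decomposes into $t$ Hamiltonian cycles, and grouping $n/2$ of them gives an $n$-factor. Evenness of $k$ allows the colors to be grouped into $k/2$ pairs.

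\textbf{Where the obvious attempt fails.} The first natural layout is $V=A\cup B$ with $|A|=kn+1$ and $|B|=m$, with $K_{kn+1}$ on $A$ split into $k$ $n$-factors and all $A$--$B$ edges in one color. This fails. Any $a\in A$ with even one further color-$k$ neighbor inside $A$ forms an $S(n,m)$ together with a center in $B$. So the excess edges cannot all sit on one color.

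\textbf{The construction I would try first.} Instead, distribute the excess over the $k/2$ color pairs.
\begin{itemize}
\item Partition $V$ into $k/2$ blocks $V_1,\dots,V_{k/2}$ of sizes about $2n+1$, together with a small remainder of about $m$ vertices.
\item On each block, take two $n$-factors in the paired colors of that block, using the Hamiltonian decomposition of an odd complete graph; this is where $n$ even is used.
\item Color the edges between blocks so that each color restricted to them is bipartite with one side of degree at most $m$, or else lives inside components of at most $n+m+1$ vertices.
\item Absorb the remainder vertices by the same cross-block rule.
\end{itemize}
The safe structures above then make each color class $S(n,m)$-free.

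\textbf{Main obstacle.} The hard step is the cross-edge bookkeeping. Each vertex must receive exactly its $m$ excess edges, and every vertex that reaches color degree $n+1$ must see only color-$c$ neighbors of degree at most $m$, or neighborhoods whose union has at most $n+m+1$ vertices. Because $2n+1$ is odd, the intended $n$-regular parts are balanced only with both parities as assumed. The concrete plan is to fix the case $k=4$ first and check the criterion edge by edge, then lift to general even $k$ by adding further pairs of colors on new $2n$-vertex blocks, each carrying two $n$-factors.
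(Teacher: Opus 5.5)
Your reduction of the equality clause to Theorem~\ref{thm:RS-upper} is correct, and so is your Hall-type criterion. The gap is the lower bound, which is the whole content of the statement: you exhibit no coloring, and the block layout cannot be completed as described.
\begin{itemize}
\item Two edge-disjoint $n$-factors need at least $2n+1$ vertices. So the ``$2n$-vertex blocks, each carrying two $n$-factors'' in your lifting step do not exist.
\item $k/2$ blocks of order at least $2n+1$ already use $kn+k/2$ vertices. This exceeds $kn+m+1$ as soon as $k>2m+2$, so your remainder would be negative.
\item The paired colors cannot take any excess. An extra edge at $u\in V_j$ in a block color gives $u$ degree $n+1$ in a component of order at least $2n+2$. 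That component contains an odd Hamiltonian cycle, so none of your three safe structures covers it.
\end{itemize}
Hence all $m$ excess edges at each block vertex would have to go into cross-block colors. That is exactly the bookkeeping you leave open, and the block structure gives no help with it.

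The missing idea is to give each color one small component, with all these components sharing a common set, and to make each color $n$-regular everywhere else.
\begin{itemize}
\item Take disjoint sets $U_c$ of order $n$, one per color, and a set $W$ of order $m+1$.
\item Color the edges inside $U_c$, and those between $U_c$ and $W$, with $c$. Color the edges inside $W$ arbitrarily.
\item Color the complete $k$-partite graph on the sets $U_c$ by Lemma~\ref{lem:bulk} with $s=n$. This is precisely where $k$ and $n$ even are used.
\end{itemize}
Then no color-$c$ edge leaves $U_c\cup W$, which has only $n+m+1$ vertices. Every vertex of $U_d$ with $d\ne c$ has color-$c$ degree exactly $n$, all from the multipartite part. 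Since $S(n,m)$ is connected, has $n+m+2$ vertices and has maximum degree $n+1$, no color contains it.

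The excess is absorbed inside the small components. A vertex of $U_c$ has color-$c$ degree $(n-1)+(m+1)=n+m$ and degree $n$ in every other color. Each vertex of $W$ lies in all $k$ small components at once. This is the even-$n$ version of Proposition~\ref{prop:lower} with the core $H$ deleted. The multipartite coloring it needs is exactly what the paper imports from~\cite{ZMZ} as Lemma~\ref{lem:bulk}.
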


Under the upper-bound condition, Theorems~\ref{thm:RS-lower}--\ref{thm:ZMZ-even}
settle the cases with odd $k$ or even $n$ for $k\ge3$.
We supply the lower bound for even $k\ge6$ and odd $n\ge3$.
Our construction applies to every $1\le m\le n$ in this parity range.
Together with the preceding results, it gives the following theorem.

\begin{theorem}\label{thm:main}
Let $k\ge5$ and $n\ge m\ge1$ be integers. If
\begin{equation}\label{eq:condition}
 (n+1)\left\lceil\frac{n+1}{k-1}\right\rceil
 >m\bigl((k-1)n+m\bigr),
\end{equation}
then
\(
 r(S(n,m);k)=kn+m+2.
\)
\end{theorem}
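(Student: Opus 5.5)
The equality in Theorem~\ref{thm:main} splits into an upper bound and a lower bound, and I would handle them separately. The upper bound $r(S(n,m);k)\le kn+m+2$ is exactly Theorem~\ref{thm:RS-upper}, since \eqref{eq:condition} is its hypothesis. For the lower bound $r(S(n,m);k)\ge kn+m+2$ with $k\ge5$, I would split by parity:
\begin{itemize}
\item if $k$ is odd, it is Theorem~\ref{thm:RS-lower};
\item if $k$ and $n$ are both even, it is Theorem~\ref{thm:ZMZ-even};
\item the remaining case is $k\ge6$ even and $n$ odd.
\end{itemize}
In the remaining case I would first dispose of $n=1$. Then $m=1$, and \eqref{eq:condition} reads $2\lceil 2/(k-1)\rceil=2>k$, which is false for $k\ge5$, so this case is vacuous. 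Hence it suffices to prove the following new lower bound: for even $k\ge6$, odd $n\ge3$ and $1\le m\le n$, there is a $k$-edge-colouring of $K_{kn+m+1}$ with no monochromatic $S(n,m)$.

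For the construction I would use the structural criterion for a monochromatic $S(n,m)$ in a colour class $G_i$. Such a copy exists if and only if there is an edge $uv$ with $d_{G_i}(u)\ge n+1$, $d_{G_i}(v)\ge m+1$, and $|N_{G_i}(u)\cup N_{G_i}(v)\setminus\{u,v\}|\ge n+m$. So every colour class must satisfy one of two conditions:
\begin{itemize}
\item its maximum degree is at most $n$; or
\item every vertex of degree at least $n+1$ has, in that colour, only neighbours of degree at most $m$, or it lies in a small component with fewer than $n+m+2$ vertices.
\end{itemize}
Each vertex has total degree $kn+m$, so on average it exceeds degree $n$ in colour classes by $m$. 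The extra degree must be absorbed by "star-like" high-degree structure hung on a small special set.

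Concretely, I would partition $V=X\cup Y$ with $|X|=kn$ and $|Y|=m+1$. Since $kn$ is even, $K_{kn}$ has a $1$-factorization. Grouping its perfect matchings gives a decomposition of $K_X$ into $k-1$ graphs that are $n$-regular and one graph that is $(n-1)$-regular, say in colour $k$. Naively, one would colour all $X$--$Y$ edges with colours so that each $x\in X$ gains at most one unit of degree in some colour, paired with a vertex of $Y$ whose neighbourhood in that colour stays of size at most $m$. This is impossible as stated, because $x$ has $m+1$ edges to $Y$. So instead I would recolour:
\begin{itemize}
\item remove a suitable set of matching edges inside $X$ and route the freed degree through $Y$;
\item arrange that every vertex of $Y$ becomes a high-degree centre in several colours whose leaves all have degree at most $m$ in that colour;
\item colour $K_Y$ internally by a small colouring (for instance a proper colouring), so that no colour gains degree above $m$ inside $Y$.
\end{itemize}
The oddness of $n$ enters because $n$-regular graphs on an odd number of vertices do not exist. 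I expect the parity adjustment to require splitting $X$ into $k/2$ pairs of blocks of size $n$ or $n+1$ arranged cyclically, in the spirit of the Ruotolo--Song odd-$k$ colouring, with $Y$ patching the defects.

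The main obstacle is this last step: producing an explicit colouring for even $k$ and odd $n$ and checking the $S(n,m)$-freeness condition above for every colour class. The two dangers are vertices of $X$ that reach degree $n+1$ in a colour next to another vertex of degree greater than $m$, and centres in $Y$ whose neighbourhoods overlap too little. I would first try a block structure with $k/2$ groups, with a different parity patch in one group, and verify the degree sequence colour by colour. Finally, I would combine this colouring with the cited theorems to conclude Theorem~\ref{thm:main}.
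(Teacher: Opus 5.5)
Your reduction is the same as the paper's. The upper bound is Theorem~\ref{thm:RS-upper}. The lower bound for odd $k$ and for even $k,n$ comes from Theorems~\ref{thm:RS-lower} and~\ref{thm:ZMZ-even}. The case $n=1$ is ruled out by \eqref{eq:condition}, which leaves even $k\ge6$ and odd $n\ge3$. However, the lower bound in that remaining case (Proposition~\ref{prop:lower} in the paper) is the entire new content of the theorem, and you do not prove it. You give no coloring of $K_{kn+m+1}$, and you yourself call its construction the main obstacle. As written, the proposal is a correct reduction to an unproved claim.

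Moreover, the mechanism you sketch cannot work in the range the theorem needs. For $k\ge5$, condition~\eqref{eq:condition} forces $n\ge(k-1)m+1$: if $n\le(k-1)m$, then $\lceil(n+1)/(k-1)\rceil\le m+1$ and $(n+1)(m+1)\le m((k-1)n+m)$. In that range, run the high/low-degree count from the proof of Lemma~\ref{lem:degrees} with $k$ colors. It shows that every $S(n,m)$-free $k$-coloring of $K_{kn+m+1}$ satisfies $n-(k-2)m\le d_c(v)\le n+m$ for all $v$ and $c$. So every vertex has degree at least $m+1$ in every color. Hence no high-degree center can have all its color-$c$ leaves of color-$c$ degree at most $m$, and the excess of $m$ per vertex cannot be absorbed in the star-like way you propose. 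Of your two alternatives, only closed monochromatic components of order at most $n+m+1$ remain.

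That is what the paper builds. For each color $c$ it constructs a color-$c$-closed set $D_c=B_c\cup U_c\cup W$ of order exactly $n+m+1$, and every vertex outside $D_c$ has color-$c$ degree at most $n$. Your parity remark is on target here: the $(k-1)n$ vertices outside $D_c$ are odd in number, so some must have color-$c$ degree below $n$. The ingredients are:
\begin{itemize}
\item a $(3k+2)$-vertex core with overlapping color-$c$ cliques $B_c$ of order five and all other color-$c$ degrees at most three (Lemma~\ref{lem:base});
\item the $k$-partite coloring of Lemma~\ref{lem:bulk} on sets $U_c$ of size $n-3$;
\item permutations $\pi_v$ that give each edge from $v$ to $U_c$ the color $\pi_v(c)$, where $\pi_v(c)=c$ exactly when $v\in B_c$, and exactly three $v$ satisfy $\pi_v(c)=d$ for each $c\ne d$ (Lemma~\ref{lem:permutations});
\item a set $W$ of size $m-1$ lying in every $D_c$.
\end{itemize}
Your plan contains no analogue of these ingredients.
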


Four colors behave differently. Our next theorem gives the value for odd $n$
when $n$ is sufficiently large relative to $m$.

\begin{theorem}\label{thm:four}
Let $m\ge2$ and $n\ge15m$ be integers, with $n$ odd. Then
\[
 r(S(n,m);4)=4n+m+1.
\]
\end{theorem}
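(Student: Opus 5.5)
The plan is to prove the two inequalities separately: a lower bound construction on $K_{4n+m}$ and an upper bound by counting on $K_{4n+m+1}$. Throughout I use the standard reformulation. A color class $G_i$ contains $S(n,m)$ if and only if it has an edge $uv$ with $d_{G_i}(u)\ge n+1$ and $d_{G_i}(v)\ge m+1$ such that
\[
\bigl|\bigl(N_{G_i}(u)\cup N_{G_i}(v)\bigr)\setminus\{u,v\}\bigr|\ge n+m.
\]
Note that Theorem~\ref{thm:RS-lower} only gives $\max\{4n+1,3n+2m+2\}$ here, so a new construction is needed. Theorem~\ref{thm:RS-upper} is also not enough, since it gives $4n+m+2$, one too many. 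The four-color case is "off by one" because $K_{4n+1}$ cannot be split into four $n$-regular color classes when $n$ is odd, as $n(4n+1)$ is odd.

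\textbf{Upper bound.} Fix a $4$-coloring of $K_N$ with $N=4n+m+1$ and suppose there is no monochromatic $S(n,m)$. For each color $i$, call a vertex $i$-big if $d_i(v)\ge n+1$. Every vertex has total degree $4n+m$, so it is $i$-big for some $i$, and the sum over $i$ of its excesses $(d_i(v)-n)^+$ is at least $m$.

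If $v$ is $i$-big, then every $i$-neighbor $u$ of $v$ must be poor in color $i$. Either $d_i(u)\le m$, or $N_i(u)$ lies almost entirely inside $N_i[v]$, with at most $m-1$ exceptional vertices outside it. In the second case $u$ is essentially a clone of $v$ in color $i$. I would then split $G_i$ into big vertices and their "low" neighbors of $i$-degree at most $m$.

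Double counting the edges of color $i$ between $i$-big vertices and low vertices gives the key inequality. A big vertex sends at least $n+1-O(m)$ edges to low vertices, while a low vertex receives at most $m$ edges. This forces the set $B_i$ of $i$-big vertices to have size $O(m\cdot|\text{low}_i|/n)$. Summing over the four colors, and using that every vertex is big in some color, should give $N\le 4n+m$ once $n\ge 15m$. The constant $15$ absorbs the clone case and the error terms. The delicate point is the clone structure: near-cliques of big vertices. There I would argue that a color class restricted to a clone cluster behaves like a clique of order about $n+1$, which uses up too many vertices.

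\textbf{Lower bound.} I need a $4$-coloring of $K_{4n+m}$ with no monochromatic $S(n,m)$. I would take $V=Y\cup X$ with $|Y|=4n+1$ and $|X|=m-1$.

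On $Y$, I would color so that each color class is $n$-regular except on a matching-sized defect. This is possible by taking a Hamiltonian decomposition of $K_{4n+1}$ into $2n$ cycles and giving each color $(n-1)/2$ whole cycles plus a near-perfect matching from one extra cycle. Then each color has degree $n$ at all vertices of $Y$ except one or two vertices with degree $n+1$. I would arrange those defect vertices so that their same-color neighbors have degree at most $m$ after $X$ is added.

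Each vertex of $X$ has degree $4n+m-1$ and so must be big in some color. I would color the edges from $X$ so that each $x\in X$ is big in one color, whose neighborhood is a set on which that color has small degree. Equivalently, the $Y$-vertices adjacent to $x$ in that color are reassigned and lose that color elsewhere.

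The main obstacle I expect is making this construction consistent: every vertex of degree $\ge n+1$ in some color must have all its same-color neighbors of degree $\le m$ in that color. I would first try a cyclic (Cayley-type) coloring on $\mathbb{Z}_{4n+1}$ with the $X$-vertices attached via arithmetic-progression neighborhoods, and then verify the double-star condition case by case.
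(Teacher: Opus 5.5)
Both halves of your plan have genuine gaps. Your upper-bound double count of big versus low vertices never uses that $n$ is odd, and no such argument can work. By Theorem~\ref{thm:ZMZ-even} with $k=4$, for every even $n\ge m$ there is an $S(n,m)$-free $4$-coloring of $K_{4n+m+1}$, so a parity-blind count would also \emph{prove} a false statement for even $n\ge 15m$. Its premise also fails. The high/low count (Lemma~\ref{lem:degrees}) shows that in an $S(n,m)$-free coloring of $K_{4n+m+1}$ every color degree lies in $[n-2m,n+m]$. So there are no low vertices at all, and the clone case you postpone is the entire problem. Counting vertices cannot settle it: four monochromatic components of order $n+m+1$, one per color, cover $N=4n+m+1$ vertices with total overlap $3(m+1)$, and nothing is contradicted.

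What is missing is a structural chain that ends in parity:
\begin{itemize}
\item every high-degree monochromatic component has exactly $n+m+1$ vertices (Lemma~\ref{lem:orders});
\item there are exactly four of them, one per color, each with a private vertex (Lemmas~\ref{lem:large-count} and~\ref{lem:components});
\item there are no triple vertices, since a triple vertex and a private vertex of the missing color would have complementary supports, leaving no color for their edge;
\item comparing $|B_a\cap B_b|\le m+1$ with the excess identity $\sum_v(|\sigma(v)|-1)=3(m+1)$ forces every vertex outside $B_c$ to have color-$c$ degree exactly $n$.
\end{itemize}
Then $G_c[V\setminus B_c]$ is $n$-regular on $3n$ vertices, which is impossible for odd $n$ by the handshake lemma. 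Your remark that $n(4n+1)$ is odd points the right way, but it never enters your argument.

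For the lower bound, note that in $K_{4n+m}$ every vertex has total degree $4n+m-1>4n$. So \emph{every} vertex of your $Y$, not only a few defect vertices, is big in some color once the edges to $X$ are added. Moreover, the proof of Lemma~\ref{lem:degrees} goes through verbatim on $K_{4n+m}$ when $n>3m+1$. It shows that in any $S(n,m)$-free coloring no vertex has any color degree at most $m$. Hence the consistency condition you set yourself, that the same-color neighbors of a big vertex have degree at most $m$, can never be met. Those neighbors must be clones instead. A color-$i$ big vertex $y$ then has $|N_i[y]|\le n+m+1$, and each vertex of $N_i[y]$ has at most $m-1$ color-$i$ neighbors outside it. So big vertices must sit in almost closed monochromatic near-cliques. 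A near-$n$-regular coloring of $Y$ built from Hamiltonian cycles has no reason to look like this, and your plan offers no mechanism to make it so.

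The paper's construction (Lemma~\ref{lem:four-lower}) is designed around exactly this structure. It uses parts $A,B_1,B_2,B_3,B_4$ of orders $m+1,n,n,n,n-1$, and each $A\cup B_c$ is a color-$c$ clique containing all big vertices of color $c$. The remaining edges are arranged so that every vertex outside the color-$c$ clique has color-$c$ degree at most $n$:
\begin{itemize}
\item between $B_i$ and $B_j$, perfect matchings colored $4$ or with the third color;
\item the two halves of $B_4$ joined to $B_i$ in colors $i+1$ and $i-1$.
\end{itemize}
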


Section~\ref{sec:many} proves Theorem~\ref{thm:main}.
Section~\ref{sec:four} proves Theorem~\ref{thm:four}.
For a coloring, let $G_c$ be the spanning graph of color $c$ and
put $d_c(v)=d_{G_c}(v)$.
A coloring is $S(n,m)$-free if it contains no monochromatic copy of $S(n,m)$.

\section{Five or more colors}\label{sec:many}
The only case not covered by the preceding lower bounds is when $k\ge6$ is even and $n$ is odd. We handle this case by first constructing a small coloring with carefully controlled monochromatic components, and then enlarging it to the required order. The enlargement uses the following multipartite coloring from~\cite[Lemma~2.4]{ZMZ}, with the vacuous case $s=0$ included.

\begin{lemma}[{\cite[proof of Lemma~2.4]{ZMZ}}]\label{lem:bulk}
Let $k\ge4$ and $s\ge0$ be even integers.
Let $U_c$, $c\in\Om$, be disjoint sets of size $s$, where $|\Om|=k$.
The complete $k$-partite graph on these sets has a $k$-edge-coloring
in which no edge of color $c$ meets $U_c$.
Every vertex in each other part has color-$c$ degree $s$.
\end{lemma}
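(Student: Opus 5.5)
The plan is to reduce the statement to a design problem on the complete graph $K_k$ whose vertex set is $\Om$, and then solve that design problem with a cyclic construction. In the $k$-partite graph, a vertex of $U_a$ has degree $(k-1)s$. It must have color-$c$ degree exactly $s$ for each of the $k-1$ colors $c\ne a$, and color-$a$ degree $0$. So the degree condition is tight and is exactly what has to be achieved.

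\emph{Step 1 (splitting each bipartite block).} Since $s$ is even, I would decompose each $K_{s,s}$ between $U_a$ and $U_b$ into two $s/2$-regular bipartite spanning subgraphs. One way is to use a $1$-factorization of $K_{s,s}$ into $s$ perfect matchings and group them in halves. Each half will receive one color. Then a vertex of $U_a$ gets color-$c$ degree $s/2$ from every block $\{a,b\}$ to which $c$ is assigned. The lemma therefore follows if we can assign to every edge $ab$ of $K_k$ an unordered pair of colors from $\Om\setminus\{a,b\}$ with the following property: for every color $c$, the edges of $K_k$ carrying $c$ form a $2$-regular spanning subgraph $F_c$ of $K_k-c$. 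Then each $a\ne c$ lies on exactly two such blocks, giving degree $s$, and $c$ itself lies on none. A count confirms consistency: the $k$ subgraphs $F_c$ have $k(k-1)$ edges in total, which is $2\binom{k}{2}$, so each edge of $K_k$ is used exactly twice. The case $s=0$ is vacuous.

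\emph{Step 2 (the cyclic family of $2$-factors).} Identify $\Om$ with $\{\infty\}\cup\ZZ_{k-1}$; note that $k-1$ is odd.
\begin{itemize}
\item Let $F_\infty$ be the Hamiltonian cycle $0,1,2,\dots,k-2,0$ on $\ZZ_{k-1}$. It avoids $\infty$ and uses every difference-$1$ edge once.
\item Fix a Hamiltonian cycle $C_0$ on $\{\infty\}\cup(\ZZ_{k-1}\setminus\{0\})$, and put $F_c=C_0+c$ for $c\in\ZZ_{k-1}$. Each $F_c$ avoids $c$.
\end{itemize}
Now count edge orbits under translation. The edges $\infty x$ form one orbit, and $C_0$ contains exactly two such edges, so each $\infty x$ is covered exactly twice. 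The edges within $\ZZ_{k-1}$ split into orbits by difference $\pm d$, $1\le d\le (k-2)/2$, and each orbit has size $k-1$. Hence the requirement is that the path $C_0-\infty$, which has $k-3$ edges, uses difference $\pm1$ exactly once and every other difference exactly twice. Together with $F_\infty$, this gives each edge multiplicity exactly two.

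\emph{Main obstacle.} The real work is constructing this Hamiltonian path $P$ on $\ZZ_{k-1}\setminus\{0\}$ with the prescribed difference multiset. I would try a zigzag (Walecki-type) path such as $1,-1,2,-2,\dots$, adjusted near the ends, and check the differences directly. The zigzag produces differences $2,3,4,\dots$ growing by one, and this must be bent so that each difference appears twice. If that gets messy, a fallback is to drop the cyclic structure. One would then take $F_c$ to be the union of two near-perfect matchings of the $K_{k-1}$ induced on $\Om\setminus\{c\}$, using two different near-$1$-factorizations, and verify the double cover via a symmetric Latin square of order $k$ with constant diagonal. Either way, the verification reduces to a finite difference or Latin-square check. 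Once it is done, Step 1 converts the assignment into the required coloring.
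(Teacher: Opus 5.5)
\textbf{There is a genuine gap, but it is a small one and your own zigzag closes it.} The paper itself gives no proof of this lemma; it quotes it from Zhao, Mao and Zhou, so your argument has to stand on its own.

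\textbf{What is correct.} Both of your reductions hold. Halving each $K_{s,s}$ reduces the lemma to covering every edge of $K_k$ exactly twice by graphs $F_c$, $c\in\Om$, where $F_c$ is a $2$-factor of $K_k-c$. Your orbit count then correctly reduces the cyclic ansatz to a Hamiltonian path $P$ on $\ZZ_{k-1}\setminus\{0\}$. This path must use the difference class $\pm1$ once and each class $\pm d$, $2\le d\le (k-2)/2$, twice.

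\textbf{The gap.} You never exhibit $P$. Its existence for every even $k\ge4$ is the whole content of the lemma. It is an infinite family of cases, not a finite check. Both routes you sketch for it are also misjudged. The fallback cannot work: $F_c$ must be $2$-regular on the odd number $k-1$ of vertices. A union of two matchings on an odd vertex set always has a vertex of degree at most $1$, because each matching misses some vertex. The zigzag, on the other hand, needs no bending.

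\textbf{The fix.} Put $t=(k-2)/2$, so that $k-1=2t+1$, and take
$P=1,-1,2,-2,\ldots,t,-t$.
\begin{itemize}
\item Its vertex set is exactly $\ZZ_{k-1}\setminus\{0\}$.
\item Its edges $\{j,-j\}$ ($1\le j\le t$) and $\{-j,j+1\}$ ($1\le j\le t-1$) have integer differences $2j$ and $2j+1$. Together these are exactly $2,3,\ldots,2t$.
\item You stopped at ``growing by one'', but modulo $2t+1$ the differences $d$ and $2t+1-d$ lie in the same class. So $d=2,\ldots,t$ give the classes $2,\ldots,t$, and $d=t+1,\ldots,2t$ give the classes $t,\ldots,1$.
\item The result is class $1$ once and every other class twice, as required.
\end{itemize}
Take $C_0=\infty,1,-1,\ldots,t,-t,\infty$ together with your $F_\infty$. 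Then every edge of $K_k$ lies in exactly two of the $F_c$, for distinct $c$ because each $F_c$ is a simple cycle. Step~1 then completes the proof.

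So the approach is sound. As written, though, the decisive step is missing, and your stated plan for it rests on overlooking the folding modulo $2t+1$.
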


The next step is to construct a core coloring on $3k+2$ vertices.
For each color $c$, the core contains a prescribed monochromatic
component $B_c$ of order five, while every vertex outside $B_c$
has color-$c$ degree at most three.
A suitable system of color permutations will then attach the sets
$U_c$ from Lemma~\ref{lem:bulk}, contributing the remaining $n-3$
neighbors in a controlled way.

\subsection{A coloring on \texorpdfstring{$3k+2$}{3k+2} vertices}
Fix an even $k\ge6$.
Let $\rho$ be an additional color label not in $\ZZ_{k-1}$, and set
$\Om=\ZZ_{k-1}\cup\{\rho\}$.
Write $h=2^{-1}$ in $\ZZ_{k-1}$.
Arithmetic on residue colors and subscripts is modulo $k-1$.
The inverse exists because $k-1$ is odd.

Let $H$ be the complete graph on $a_i,x_i,y_i$, $i\in\ZZ_{k-1}$,
and $b_1,\ldots,b_5$. Thus $|V(H)|=3k+2$.
Put $B_i=\{a_i,x_i,x_{i-1},y_i,y_{i-2}\}$ and
$B_\rho=\{b_1,\ldots,b_5\}$.
Also put $(\alpha_1,\ldots,\alpha_5)=(h,1,1,2,-h)$.
We color the edges of $H$ as follows.

For distinct $i,j$, the edges $a_i a_j$, $x_i x_j$, and $y_i y_j$
have colors $h(i+j)$, $h(i+j+1)$, and $h(i+j+2)$, respectively.
All edges inside $B_\rho$ have color $\rho$.

An edge $a_i x_j$ has color $h(i+j+1)$, with three exceptions.
The edge $a_i x_i$ has color $i$.
The edges $a_i x_{i+1}$ and $a_i x_{i-2}$ have color $\rho$.
For edges $a_i y_j$, the default color is $h(i+j+2)$.
It is replaced by $i$ at $j=i$, and by $\rho$ at $j=i+2$.

For edges $x_i y_j$, start with color $h(i+j+1)$.
At $j=i,i-2$, replace it by $i$; at $j=i+1,i-1$, replace it by $i+1$.
At $j=i-3$, replace it by $\rho$.
These five indices are distinct modulo $k-1$, since $k-1\ge5$.

Finally, the edge $a_i b_j$ has color $i+\alpha_j$.
In the order $b_1,\ldots,b_5$, the colors at $x_i$ are
$i+h,i+h,i+3h,i-h,i-1$.
The colors at $y_i$ are $i+1,i+1,i+1,i+h,i+3h$. Figure~\ref{fig:coloring} illustrates the color assignments
among the vertices $a_i$, $x_i$, and $y_i$, together with
the set $B_i$.
\begin{figure}[htbp]
\centering
\begin{tikzpicture}[
    x=1cm,y=0.84cm,
    every node/.style={font=\small},
    vertex/.style={circle,fill=black,draw=black,
                   inner sep=0pt,minimum size=4.4pt},
    edgelabel/.style={fill=white,inner sep=1pt,font=\scriptsize},
    paneltitle/.style={font=\small,anchor=base},
    rulezero/.style={red,line width=0.8pt},
    ruleone/.style={blue,line width=0.8pt},
    ruletwo/.style={black,line width=1.2pt}
]

\begin{scope}
    \coordinate (ai) at (-1.75,2.40);
    \coordinate (aj) at ( 1.75,2.40);
    \coordinate (xi) at (-1.75,1.55);
    \coordinate (xj) at ( 1.75,1.55);
    \coordinate (yi) at (-1.75,0.70);
    \coordinate (yj) at ( 1.75,0.70);

    \draw[rulezero] (ai)--(aj);
    \draw[ruleone] (xi)--(xj);
    \draw[ruleone] (ai)--(xj);
    \draw[ruleone] (xi)--(aj);
    \draw[ruleone] (xi)--(yj);
    \draw[ruleone] (yi)--(xj);
    \draw[ruletwo] (yi)--(yj);
    \draw[ruletwo] (ai)--(yj);
    \draw[ruletwo] (yi)--(aj);

    \node[vertex,label=left:$a_i$]  at (ai) {};
    \node[vertex,label=right:$a_j$] at (aj) {};
    \node[vertex,label=left:$x_i$]  at (xi) {};
    \node[vertex,label=right:$x_j$] at (xj) {};
    \node[vertex,label=left:$y_i$]  at (yi) {};
    \node[vertex,label=right:$y_j$] at (yj) {};

    \draw[rulezero] (-1.00, 0.18)--(-0.45, 0.18);
    \node[anchor=west,font=\scriptsize] at (-0.20, 0.18)
        {$h(i+j)$};
    \draw[ruleone] (-1.00,-0.18)--(-0.45,-0.18);
    \node[anchor=west,font=\scriptsize] at (-0.20,-0.18)
        {$h(i+j+1)$};
    \draw[ruletwo] (-1.00,-0.54)--(-0.45,-0.54);
    \node[anchor=west,font=\scriptsize] at (-0.20,-0.54)
        {$h(i+j+2)$};

    \node[paneltitle, yshift=-2mm] at (0,-1.04) {(a) Default color assignments};
\end{scope}
%
\begin{scope}[xshift=8cm]
    \coordinate (ba)  at ( 0,    1.10);
    \coordinate (bx0) at ( 0,    2.60);
    \coordinate (bxp) at (-1.80, 1.55);
    \coordinate (bxm) at ( 1.80, 1.55);
    \coordinate (by0) at (-1.10,-0.10);
    \coordinate (byp) at ( 1.10,-0.10);

    \draw (ba)--node[edgelabel,right,pos=.62] {$i$}    (bx0);
    \draw (ba)--node[edgelabel,above,pos=.62] {$\rho$} (bxp);
    \draw (ba)--node[edgelabel,above,pos=.62] {$\rho$} (bxm);
    \draw (ba)--node[edgelabel,left,pos=.65]  {$i$}    (by0);
    \draw (ba)--node[edgelabel,right,pos=.65] {$\rho$} (byp);

    \node[vertex,label=below:$a_i$]     at (ba)  {};
    \node[vertex,label=above:$x_i$]     at (bx0) {};
    \node[vertex,label=left:$x_{i+1}$]  at (bxp) {};
    \node[vertex,label=right:$x_{i-2}$] at (bxm) {};
    \node[vertex,label=below:$y_i$]     at (by0) {};
    \node[vertex,label=below:$y_{i+2}$] at (byp) {};

    \node[paneltitle, yshift=-2mm] at (0,-1.04) {(b) Exceptions at $a_i$};
\end{scope}
\end{tikzpicture}

\vspace{0.6em}

\begin{tikzpicture}[
    x=1cm,y=0.84cm,
    every node/.style={font=\small},
    vertex/.style={circle,fill=black,draw=black,
        inner sep=0pt,minimum size=4.4pt},
    edgelabel/.style={fill=white,inner sep=1pt,
        font=\scriptsize},
    paneltitle/.style={font=\small,anchor=base},
    rulezero/.style={red,line width=0.8pt},
    ruleone/.style={blue,line width=0.8pt},
    ruletwo/.style={black,line width=1.2pt}
]
\begin{scope}
    \coordinate (cx)  at ( 0,    1.10);
    \coordinate (cy0) at ( 0,    2.60);
    \coordinate (cym) at (-1.80, 1.55);
    \coordinate (cyp) at ( 1.80, 1.55);
    \coordinate (cy1) at (-1.10,-0.10);
    \coordinate (cy3) at ( 1.10,-0.10);

    \draw (cx)--node[edgelabel,right,pos=.62] {$i$}    (cy0);
    \draw (cx)--node[edgelabel,above,pos=.62] {$i$}    (cym);
    \draw (cx)--node[edgelabel,above,pos=.62] {$i+1$}  (cyp);
    \draw (cx)--node[edgelabel,left,pos=.65]  {$i+1$}  (cy1);
    \draw (cx)--node[edgelabel,right,pos=.65] {$\rho$} (cy3);

    \node[vertex,label=below:$x_i$]     at (cx)  {};
    \node[vertex,label=above:$y_i$]     at (cy0) {};
    \node[vertex,label=left:$y_{i-2}$]  at (cym) {};
    \node[vertex,label=right:$y_{i+1}$] at (cyp) {};
    \node[vertex,label=below:$y_{i-1}$] at (cy1) {};
    \node[vertex,label=below:$y_{i-3}$] at (cy3) {};

    \node[paneltitle, yshift=-2mm] at (0,-1.04) {(c) Exceptions at $x_i$};
\end{scope}

\begin{scope}[xshift=8cm]
    \coordinate (da)  at ( 0,    2.60);
    \coordinate (dx0) at (-1.70, 1.40);
    \coordinate (dxm) at ( 1.70, 1.40);
    \coordinate (dy0) at (-1.05,-0.10);
    \coordinate (dym) at ( 1.05,-0.10);

    \draw (da)--(dx0) (da)--(dxm) (da)--(dy0) (da)--(dym);
    \draw (dx0)--(dxm) (dy0)--(dym);
    \draw (dx0)--(dy0) (dx0)--(dym);
    \draw (dxm)--(dy0) (dxm)--(dym);

    \node[vertex,label=above:$a_i$]     at (da)  {};
    \node[vertex,label=left:$x_i$]      at (dx0) {};
    \node[vertex,label=right:$x_{i-1}$] at (dxm) {};
    \node[vertex,label=below:$y_i$]     at (dy0) {};
    \node[vertex,label=below:$y_{i-2}$] at (dym) {};

    \node[font=\small] at (0,1.02) {color $i$};
    \node[paneltitle, yshift=-2mm] at (0,-1.04) {(d) The set $B_i$};
\end{scope}
\end{tikzpicture}
\caption{Default color assignments in (a), exceptional assignments
in (b) and (c), and the monochromatic clique on $B_i$ in (d).}
\label{fig:coloring}
\end{figure}

\begin{lemma}\label{lem:base}
For each $c\in\Om$, the set $B_c$ is a complete component of color $c$.
Every vertex outside $B_c$ has color-$c$ degree at most three.
More precisely, $d_{i+h}(x_i)=2$ and $d_\rho(y_i)=2$.
All other color degrees outside the specified components are three.
\end{lemma}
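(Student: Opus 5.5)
The proof is a direct verification, done color by color. Because the coloring is invariant under the shift $i\mapsto i+1$ on subscripts and residue colors, with $\rho$ and the $b_j$ fixed, it suffices to treat the residue color $0$ and the color $\rho$. The shift-invariance holds because every default color has the form $h(i+j+t)$, and every exception is written relative to $i$. For color $0$ and for $\rho$, I will list, vertex by vertex, all incident edges of that color and check the claimed degrees.

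For a residue color $c$, first consider the default edges. Among the $a$'s, $a_ia_j$ has color $c$ iff $i+j=2c$. This is a perfect matching on $\{a_i\}$ minus the fixed point $a_c$, since $2$ is invertible. The same holds for $x$'s with $i+j=2c-1$ (fixed point $x_{c-h}$) and for $y$'s with $i+j=2c-2$ (fixed point $y_{c-1}$). Across classes, $a_ix_j$ has default color $c$ iff $j=2c-1-i$, and similarly for $a_iy_j$ and $x_iy_j$. So before exceptions, each vertex of one class has roughly one default color-$c$ neighbor in each other class.

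Next I account for the exceptions. Each exceptional edge removes one default color and adds one new color. I will tabulate, for each vertex $v$ and each color $c$, the change in color-$c$ degree. For example, $a_ix_i$ loses default color $h(2i+1)=i+h$ and gains color $i$. For color $i$, the gains are exactly the edges inside $B_i$: $a_ix_i$, $a_ix_{i-1}$ via the exception $a_{i-1}x_{(i-1)+1}$ turned to $\rho$, and so on. More precisely, I will check that $B_i$ is a clique of color $i$. Its ten edges are
\begin{itemize}
\item $a_ix_i$ and $a_iy_i$ (exceptions),
\item $x_iy_i$ and $x_iy_{i-2}$ (exceptions with color $i$),
\item $x_{i-1}y_i$ and $x_{i-1}y_{i-2}$ (exceptions with color $(i-1)+1$),
\item $a_ix_{i-1}$, $x_ix_{i-1}$, $y_iy_{i-2}$, $a_iy_{i-2}$ (defaults, which I will verify).
\end{itemize}
For the defaults, $x_ix_{i-1}$ has color $h(2i)=i$, and $y_iy_{i-2}$ has color $h(2i)=i$. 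Also $a_ix_{i-1}$ has color $h(2i)=i$, and $a_iy_{i-2}$ has color $h(2i)=i$. After that I will verify that no edge leaving $B_i$ has color $i$. This comes down to showing that the fixed points and matching partners of the members of $B_i$ in the default structure are exactly the edges re-colored by exceptions. I will also check that the $a_jb_\ell$ edges of color $i$, namely $a_{i-\alpha_\ell}b_\ell$, avoid $B_i$; this needs $\alpha_\ell\ne0$.

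For vertices outside $B_c$, I will count color-$c$ degrees. The default structure gives about three, one per class when the vertex is not a fixed point. The exceptions and the $b$-edges then adjust this. For $b_\ell$, the colors $i+\alpha_\ell$ form a bijection over $i$, so $b_\ell$ has exactly one $a$-neighbor in each residue color. The listed colors at $x_i$ and $y_i$ suggest that the $b$-edges from $x_i,y_i$ supply the remaining degree. Each $b_\ell$ also needs degree three in each residue color from its $x$- and $y$-neighbors, and the given lists should show each residue color appearing at $b_\ell$ with one $x$ and one $y$ as $i$ varies. The special cases $d_{i+h}(x_i)=2$ and $d_\rho(y_i)=2$ come out of this bookkeeping. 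For $x_i$, color $i+h$ is both the fixed-point color of $x_i$ in the $x$-matching and the default color of $a_ix_i$, which is lost to the exception. For $\rho$, I will list the $\rho$-edges at each vertex. At $a_i$ they are $x_{i+1}$, $x_{i-2}$ and $y_{i+2}$. At $x_j$ they are $a_{j-1}$, $a_{j+2}$ and $y_{j-3}$. At $y_j$ they are $a_{j-2}$ and $x_{j+3}$, which gives degree two.

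The main obstacle is the bookkeeping for residue colors. Each exception shifts degree between two colors at both endpoints, so the check is that the net effect leaves every non-$B_c$ vertex at exactly three, with the two stated exceptions. I would organize it as one table indexed by offsets relative to $i$ and verify the counts modulo $k-1$. This uses $k-1\ge5$ so that the offsets $0,\pm1,\pm2,-3,h,3h$ behave generically. Along the way I need to confirm coincidences such as $3h\ne 1$ and $h\ne -1$; where two offsets could coincide for small $k$, the case $k=6$ must be checked separately.
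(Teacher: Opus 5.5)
Your route is the same as the paper's: a direct degree count, made uniform by the shift $i\mapsto i+1$. What you actually compute is correct. The ten color-$i$ edges of $B_i$ are right, and your $\rho$-lists give $d_\rho=3,3,2$ at $a_i,x_j,y_j$. But the substance of the lemma is only announced: the residue-color degrees at $a_i,x_i,y_i$, and the fact that no color-$i$ edge leaves $B_i$. As written, this is a plan rather than a proof.

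The paper makes the count immediate by grouping each vertex's neighbors so that each $b$-edge fills a specific hole left by an exception or by a fixed point of the default matchings. For $a_i$ the groups are $A\setminus\{a_i\}$, $X\cup\{b_1,b_2,b_5\}$, $Y\cup\{b_3,b_4\}$. For $x_i$ they are $A\cup\{b_1,b_3\}$, $X\setminus\{x_i\}$, $Y\cup\{b_2,b_4,b_5\}$. For $y_i$ they are $A\cup\{b_1\}$, $(Y\setminus\{y_i\})\cup\{b_2\}$, $X\cup\{b_3,b_4,b_5\}$. Each group then contains exactly one neighbor of every residue color, apart from a short list of special colors counted by hand. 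This yields all degrees at once, including $d_i=4$ at all five vertices of $B_i$. Closure of $B_i$ then follows from the $K_5$, with no search for escaping edges.

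Your closure check, by contrast, mentions only the $a_jb_\ell$ edges and omits the $x_jb_\ell$ and $y_jb_\ell$ edges. Your account of $d_{i+h}(x_i)=2$ is also incomplete. The default edge $x_iy_i$ of color $i+h$ is lost as well, and $b_1,b_2$ restore two edges of that color.

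Second, your claim that $k-1\ge5$ makes the offsets $0,\pm1,\pm2,-3,h,3h$ ``generic'' is false. In $\ZZ_5$ one has $h=-2$, $3h=-1$ and $-3=2$. In $\ZZ_7$, $h=-3$ and $3h=-2$. In $\ZZ_9$, $3h=-3$. You flag only $k=6$, so an offset table built on genericity would be unjustified for $k=6,8,10$.

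The coincidences are in fact harmless, but for a reason you do not state. Each map $j\mapsto h(i+j+s)$ is a bijection. So the default colors removed by exceptions are automatically distinct once the exceptional indices within one class are distinct. For the edges $x_iy_j$ these indices are $i,i-2,i\pm1,i-3$, which are distinct as soon as $k-1\ge5$. The rest is exact multiset bookkeeping, and no separate small cases are needed.
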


\begin{proof}
Put
\[
 A=\{a_i:i\in\ZZ_{k-1}\},\qquad
 X=\{x_i:i\in\ZZ_{k-1}\},\qquad
 Y=\{y_i:i\in\ZZ_{k-1}\}.
\]
Fix $i\in\ZZ_{k-1}$.
Since $h$ is invertible in $\ZZ_{k-1}$, each map
$j\mapsto h(i+j+s)$, $s\in\{0,1,2\}$, is a permutation
of $\ZZ_{k-1}$. For $c\in\ZZ_{k-1}\setminus\{i\}$, the coloring gives
$a_i$ exactly one color-$c$ neighbor in each of
\[
 A\setminus\{a_i\},\qquad
 X\cup\{b_1,b_2,b_5\},\qquad
 Y\cup\{b_3,b_4\}.
\]
Thus $d_c(a_i)=3$.
Counting the neighbors in these three sets also gives
\[
 d_i(a_i)=0+2+2=4,\qquad
 d_\rho(a_i)=0+2+1=3.
\]

For $c\in\ZZ_{k-1}\setminus\{i,i+1,i+h\}$, the vertex
$x_i$ has exactly one color-$c$ neighbor in each of
\[
 A\cup\{b_1,b_3\},\qquad
 X\setminus\{x_i\},\qquad
 Y\cup\{b_2,b_4,b_5\}.
\]
Hence $d_c(x_i)=3$.
For the other colors, the same three sets give
\[
\begin{aligned}
 d_i(x_i)=d_{i+1}(x_i)&=1+1+2=4,\\
 d_{i+h}(x_i)&=1+0+1=2,\\
 d_\rho(x_i)&=2+0+1=3.
\end{aligned}
\]

For $c\in\ZZ_{k-1}\setminus\{i,i+2\}$, the vertex
$y_i$ has exactly one color-$c$ neighbor in each of
\[
 A\cup\{b_1\},\qquad
 (Y\setminus\{y_i\})\cup\{b_2\},\qquad
 X\cup\{b_3,b_4,b_5\}.
\]
Therefore $d_c(y_i)=3$, while
\[
 d_i(y_i)=d_{i+2}(y_i)=1+1+2=4,\qquad
 d_\rho(y_i)=1+0+1=2.
\]

For each $j\in\{1,\ldots,5\}$ and $c\in\ZZ_{k-1}$,
the vertex $b_j$ has exactly one color-$c$ neighbor
in each of $A,X,Y$.
Its four edges to the other vertices of $B_\rho$
have color $\rho$.
Thus $d_c(b_j)=3$ and $d_\rho(b_j)=4$.

By the definition of the coloring,
$G_c[B_c]\cong K_5$ for every $c\in\Om$.
The degree counts above give $d_c(v)=4$ for every $v\in B_c$.
Hence no edge of color $c$ joins $B_c$ to its complement,
so $B_c$ is a complete component of color $c$.
\end{proof}

\subsection{Permutations of the colors}
We next specify the colors between $H$ and the new vertex classes.
For each $v\in V(H)$, we will use a permutation $\pi_v$ of the colors.
The condition $\pi_v(c)=c$ if and only if $v\in B_c$ will ensure that
$B_c$ joins only the new class $U_c$ in color $c$ and that no vertex
outside $B_c$ joins $U_c$ in that color. We also require that, for any
distinct colors $c,d$, exactly three vertices $v$ satisfy $\pi_v(c)=d$.
Thus each vertex of $U_c$ will have exactly three color-$d$ neighbors in $H$,
which, together with the $n-3$ supplied by Lemma~\ref{lem:bulk},
will give color-$d$ degree $n$.

Start with the permutation of $\Om$ that doubles elements of $\ZZ_{k-1}$ and fixes $\rho$.
Define $f_a$ by changing the images of $-h,\rho$ to $\rho,-1$.
Define $f_x$ by changing the images of $1,h,\rho$ to $1,\rho,2$.
Define $f_y$ by changing the images of $2,1,\rho$ to $2,\rho,4$.
In each list the images are given in the order of the inputs.
All other images remain unchanged.

Extend each translation of $\ZZ_{k-1}$ to $\Om$ by fixing $\rho$.
With this convention, for $T\in\{a,x,y\}$ put
\begin{equation}\label{eq:permutations}
 \pi_{T_i}(c)=i+f_T(c-i).
\end{equation}
For $b_j$, put $\pi_{b_j}(c)=c+\alpha_j$ for $c\in\ZZ_{k-1}$
and $\pi_{b_j}(\rho)=\rho$.

\begin{lemma}\label{lem:permutations}
Each $\pi_v$ is a permutation of $\Om$ that fixes $c$ exactly when $v\in B_c$.
For distinct $c,d\in\Om$, exactly three vertices $v$ satisfy $\pi_v(c)=d$.
\end{lemma}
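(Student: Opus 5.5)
The plan is to verify both claims directly from \eqref{eq:permutations}, reducing everything to the three fixed maps $f_a,f_x,f_y$ and a translation count. Let $g$ denote the base permutation $c\mapsto 2c$ on $\ZZ_{k-1}$, $\rho\mapsto\rho$.

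\emph{Step 1: each $\pi_v$ is a permutation.} Conjugating by a translation preserves bijectivity, so it suffices to check $f_a,f_x,f_y$. Each is obtained from $g$ by reassigning the images of a few inputs.
\begin{itemize}
\item For $f_a$: $g$ sends $\{-h,\rho\}$ onto $\{-1,\rho\}$, and $f_a$ sends this set onto $\{\rho,-1\}$.
\item For $f_x$: $g$ sends $\{1,h,\rho\}$ onto $\{2,1,\rho\}$, and $f_x$ sends it onto $\{1,\rho,2\}$.
\item For $f_y$: $g$ sends $\{2,1,\rho\}$ onto $\{4,2,\rho\}$, and $f_y$ sends it onto $\{2,\rho,4\}$.
\end{itemize}
In each case the modified inputs are mapped onto the same image set, so bijectivity of $g$ is inherited. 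For $b_j$, $\pi_{b_j}$ is a translation on $\ZZ_{k-1}$ that fixes $\rho$.

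\emph{Step 2: fixed points.} The relation $\pi_{T_i}(c)=c$ is equivalent to $f_T(c-i)=c-i$, so we need the fixed points of each $f_T$.
\begin{itemize}
\item The unmodified part of $g$ fixes only $0$, because $2t=t$ forces $t=0$ and $\rho$ has been moved.
\item $f_a$ fixes exactly $\{0\}$.
\item $f_x$ fixes exactly $\{0,1\}$, using the new image $1\mapsto1$.
\item $f_y$ fixes exactly $\{0,2\}$.
\end{itemize}
Hence $a_i$ is fixed only at color $i$, $x_i$ only at $i$ and $i+1$, and $y_i$ only at $i$ and $i+2$. This matches $a_i\in B_i$, $x_i\in B_i\cap B_{i+1}$ and $y_i\in B_i\cap B_{i+2}$, and these are the only memberships, by the definition of $B_i$. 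Each $b_j$ fixes only $\rho$, since every $\alpha_j\in\{h,1,2,-h\}$ is nonzero modulo $k-1\ge5$. This matches $b_j\in B_\rho$ only.

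\emph{Step 3: counting solutions of $\pi_v(c)=d$.} I split into three cases.

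If $c=\rho$ and $d\in\ZZ_{k-1}$, each $f_T(\rho)$ is a residue. So for each $T\in\{a,x,y\}$ exactly one $i$ satisfies $i+f_T(\rho)=d$, and no $b_j$ contributes. This gives three vertices.

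If $c\in\ZZ_{k-1}$ and $d=\rho$, each $f_T$ has a unique preimage of $\rho$, namely $-h$, $h$ and $1$ respectively. This yields one $i$ per type, and again no $b_j$ contributes. This gives three vertices.

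If $c,d\in\ZZ_{k-1}$ are distinct, set $\delta=d-c\ne0$ and substitute $t=c-i$. For type $T$, the number of solutions equals $\#\{t\in\ZZ_{k-1}: f_T(t)\in\ZZ_{k-1},\ f_T(t)-t=\delta\}$. For $g$ itself, $g(t)-t=t$, so each $\delta$ has exactly one solution $t=\delta$. The modifications only delete solutions, since the new residue images give $f_T(t)-t\in\{0\}$ or are $\rho$:
\begin{itemize}
\item $f_a$ loses $\delta=-h$;
\item $f_x$ loses $\delta=1$ and $\delta=h$;
\item $f_y$ loses $\delta=2$ and $\delta=1$.
\end{itemize}
The lost multiset is $\{h,1,1,2,-h\}$. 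The vertices $b_j$ contribute exactly one solution for each $\delta=\alpha_j$, that is, the multiset $\{h,1,1,2,-h\}$. These two multisets coincide, so every $\delta\ne0$ receives exactly $3$ solutions in total.

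The main care needed is in this last bookkeeping step. One must confirm that the modified inputs produce no new nonzero differences: the new residue images of $f_x$ and $f_y$ are fixed points, contributing $\delta=0$, and images equal to $\rho$ contribute nothing. One must also note that the count is a multiset identity, so coincidences such as $h=2$ or $-h=2$ for small $k-1$ do no harm.
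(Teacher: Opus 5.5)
Your proof is correct and follows the paper's argument. You check that the modified images are merely permuted, find the same fixed-point sets $\{i\}$, $\{i,i+1\}$, $\{i,i+2\}$, and give the same direct counts when $c$ or $d$ is $\rho$. For distinct residues you compare the aggregate multiset of lost differences $\{h,1,1,2,-h\}$ with $\{\alpha_1,\ldots,\alpha_5\}$. The paper instead groups $\{a_j\}$ with $b_5$, $\{x_j\}$ with $b_1,b_2$, and $\{y_j\}$ with $b_3,b_4$, and shows each group gives exactly one solution; this is the same bookkeeping done group by group.
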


\begin{proof}
The changes defining $f_a,f_x,f_y$ only permute the specified
images of the original permutation.
Hence all three maps are permutations of $\Om$.
It follows from their definitions that all $\pi_v$
are also permutations of $\Om$.

For each $i\in\ZZ_{k-1}$, the fixed-point sets of
$\pi_{a_i},\pi_{x_i},\pi_{y_i}$ are
$\{i\}$, $\{i,i+1\}$, and $\{i,i+2\}$, respectively.
Since every $\alpha_j$ is nonzero, $\pi_{b_j}$ fixes only $\rho$.
Thus $\pi_v(c)=c$ if and only if $v\in B_c$.

Now fix distinct $c,d\in\ZZ_{k-1}$.
For $T\in\{a,x,y\}$, we have
\[
 \pi_{T_i}(c)=i+2(c-i)=2c-i
\]
unless $c-i$ is one of the modified arguments of $f_T$.
At those arguments, $\pi_{T_i}(c)$ is either $c$ or $\rho$,
and therefore cannot equal $d$.
Consequently, any solution of $\pi_{T_i}(c)=d$ has
$i=2c-d$.
For this index, the definitions give
\[
\begin{aligned}
 \pi_{a_i}(c)=d
 &\quad\Longleftrightarrow\quad d-c\ne-h,\\
 \pi_{x_i}(c)=d
 &\quad\Longleftrightarrow\quad d-c\notin\{1,h\},\\
 \pi_{y_i}(c)=d
 &\quad\Longleftrightarrow\quad d-c\notin\{1,2\}.
\end{aligned}
\]

Since $\alpha_5=-h$, there is exactly one solution of
$\pi_v(c)=d$ with
$v\in\{a_j:j\in\ZZ_{k-1}\}\cup\{b_5\}$.
Similarly, $\alpha_1=h$ and $\alpha_2=1$ give exactly one
solution with
$v\in\{x_j:j\in\ZZ_{k-1}\}\cup\{b_1,b_2\}$,
while $\alpha_3=1$ and $\alpha_4=2$ give exactly one with
$v\in\{y_j:j\in\ZZ_{k-1}\}\cup\{b_3,b_4\}$.
These three sets partition $V(H)$.
Hence exactly three vertices satisfy $\pi_v(c)=d$.

Finally, if $c=\rho$ and $d\in\ZZ_{k-1}$, the definitions
give exactly the three solutions
\[
 v=a_{d+1},\qquad v=x_{d-2},\qquad v=y_{d-4}.
\]
If $c\in\ZZ_{k-1}$ and $d=\rho$, the three solutions are
\[
 v=a_{c+h},\qquad v=x_{c-h},\qquad v=y_{c-1}.
\]
\end{proof}

\subsection{The lower bound}
\begin{proposition}\label{prop:lower}
Let $k\ge6$ be even, let $n\ge3$ be odd, and let $1\le m\le n$.
There is an $S(n,m)$-free $k$-edge-coloring of $K_{kn+m+1}$.
\end{proposition}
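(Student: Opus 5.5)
We combine the core coloring $H$ on $3k+2$ vertices with the multipartite coloring of Lemma~\ref{lem:bulk}, and then handle the leftover $m-1$ vertices. Set $s=n-3$, which is even since $n$ is odd, and take disjoint sets $U_c$, $c\in\Om$, of size $s$. The vertex count is $(3k+2)+k(n-3)=kn+2$, so this gives a coloring of $K_{kn+2}$. For $m\ge 1$ we need $kn+m+1\ge kn+2$ vertices, and the $m-1$ extra vertices are treated below.

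The edges are colored as follows. Edges inside $H$ get the coloring of Lemma~\ref{lem:base}. Edges between distinct parts $U_c,U_d$ get the coloring of Lemma~\ref{lem:bulk}. Edges inside each $U_c$ get color $c$. For $v\in V(H)$ and $u\in U_c$, the edge $vu$ gets color $\pi_v(c)$. Now fix a color $d$ and compute degrees.
\begin{itemize}
\item A vertex $u\in U_c$ with $c\ne d$ has $s$ color-$d$ neighbours from Lemma~\ref{lem:bulk} and exactly three in $H$ by Lemma~\ref{lem:permutations}, so $d_d(u)=n$.
\item A vertex $u\in U_d$ has no color-$d$ edges to other parts. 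It has $s-1$ inside $U_d$ and five in $H$, namely the vertices of $B_d$, since $\pi_v(d)=d$ exactly when $v\in B_d$. So $d_d(u)=n+1$.
\item A vertex $v\in B_d$ has degree $4+s=n+1$.
\item A vertex $v\in H\setminus B_d$ has at most $3$ color-$d$ neighbours in $H$ and $s$ in $U_{\pi_v^{-1}(d)}$, a part different from $U_d$. So $d_d(v)\le n$.
\end{itemize}
The vertices of color-$d$ degree at least $n+1$ are therefore exactly those in $U_d\cup B_d$. This set has no color-$d$ edges to the rest of the graph, because $U_d$ meets $H$ only in $B_d$ in color $d$, and $B_d$ is a complete color-$d$ component in $H$. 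Hence $U_d\cup B_d$ is a color-$d$ component of order $s+5=n+2$. A copy of $S(n,m)$ needs a center of degree at least $n+1$, and that center's component must have at least $n+m+2>n+2$ vertices. So no monochromatic $S(n,m)$ exists, which settles $m=1$.

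For $m\ge2$, add a set $W$ of $m-1$ vertices. Each color-$d$ component $U_d\cup B_d$ has order $n+2$, and we enlarge it to a clique of order $n+m+1$ in color $d$. This fails directly, because $W$ would have to join every component in every color. Instead, the natural move is to enlarge each part so that every $U_d$ has size $s+m-1$, and to let the big color-$d$ component have order $n+m+1$. The degree conditions on the other vertices then break unless we compensate, since the bulk lemma needs even part sizes and uniform sizes.

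The cleaner route is this: a vertex $w$ joined to all of $U_d\cup B_d$ in color $d$ raises that component to order $n+3$ with all degrees at least $n+1$. A double star with $m$ leaves on the second center needs order $n+m+2$. So the extra vertices can be placed inside big components, and the budget is $m-1$ extra vertices per component. I would choose a partition of $W$, or better, a second application of Lemma~\ref{lem:bulk}-style balancing, so that each $w\in W$ sends edges of color $d$ only into a single component of color $d$ of final order at most $n+m+1$. Meanwhile $w$ must have degree at most $n$ in every color whose component it does not lie in, which is possible since $|V|-1=kn+m<k(n+1)$.

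The main obstacle is exactly this step: attaching $W$ while keeping every other vertex's color-$d$ degree at most $n$. The vertices of $U_c$ and $H\setminus B_d$ already have degree exactly $n$ in many colors, so an edge to $W$ in color $d$ would create a new high-degree vertex outside the big component. I would first try making $W$ a clique and giving edges $wv$ the color of the component of $v$ in a $k$-coloring in which every vertex of degree exactly $n$ in color $d$ lies in $U_d\cup B_d$. If this fails, I would instead choose the sizes of the parts $U_c$ to differ by the parity-preserving amount $m-1$ when $m$ is odd, with a separate adjustment for even $m$.
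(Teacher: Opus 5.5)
Your treatment of $m=1$ is correct and is exactly the paper's construction with $W=\varnothing$. Your degree counts and the closure of $B_d\cup U_d$ under color-$d$ edges are essentially the paper's verification.

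For $m\ge2$, however, there is a genuine gap. You never produce a coloring of $K_{kn+m+1}$, and you leave the attachment of $W$ as an unresolved ``main obstacle'' with hedged fallbacks. The direction you lean towards also cannot work. Every $u\in U_c$ already has color-$d$ degree exactly $n$ for each $d\ne c$. So giving an edge $wu$ a color $d\ne c$ has one of two effects. Either it creates a vertex of color-$d$ degree $n+1$ outside the big color-$d$ set, or it attaches $u$ and its $n$ other color-$d$ neighbors to that set, destroying the bound on its order. Within your scheme, $wu$ must therefore get color $c$, which places $w$ in the big color-$c$ set. For $n\ge5$ (so $U_c\ne\varnothing$) this happens for \emph{every} $c$. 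So partitioning $W$ among the components, with each $w$ keeping degree at most $n$ in the colors whose component it avoids, is not an available mechanism. (The inequality $kn+m<k(n+1)$ you invoke also fails once $m\ge k$.) Resizing the parts $U_c$ conflicts with Lemma~\ref{lem:bulk}, as you note yourself.

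The missing idea is that the big sets for different colors may overlap and need not be cliques. For each color $c$ separately, the argument only needs a set $D_c$ of order at most $n+m+1$ that no color-$c$ edge leaves, with every vertex outside $D_c$ of color-$c$ degree at most $n$. Vertices inside $D_c$, including those of $W$, may have any color-$c$ degree. So put all of $W$ into every such set: $D_c=B_c\cup U_c\cup W$, of order $5+(n-3)+(m-1)=n+m+1$. Concretely, for $w\in W$:
\begin{itemize}
\item give $wa_i,wx_i,wy_i$ color $i$ (all three vertices lie in $B_i$);
\item give $wb_j$ and all edges inside $W$ color $\rho$;
\item give $wu$ color $c$ for $u\in U_c$.
\end{itemize}
Then every color-$c$ edge at $W$ stays inside $D_c$, and no vertex outside $D_c$ gains a color-$c$ neighbor. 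Your $m=1$ degree bounds outside $D_c$ therefore survive unchanged, and the same component argument finishes the proof.

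Your remark that enlarging the components ``fails directly, because $W$ would have to join every component in every color'' dismisses exactly this construction. What fails is only the clique requirement; for instance, $x_i\in B_i\cap B_{i+1}$ cannot be joined to $w$ in both colors. That requirement is unnecessary, since each edge $wv$ needs only one color $c$ with $v\in D_c$.
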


\begin{proof}
Take $H$ above, disjoint sets $U_c$ of size $n-3$ for $c\in\Om$,
and a set $W$ of size $m-1$.
The total order is $(3k+2)+k(n-3)+(m-1)=kn+m+1$.
Retain the coloring on $H$.
Since $n-3$ is nonnegative and even, apply Lemma~\ref{lem:bulk} to color the edges
between distinct sets $U_c$. Color all edges inside $U_c$ with $c$.
For $v\in V(H)$, give every edge from $v$ to $U_c$ color $\pi_v(c)$.
For $w\in W$, give its edges to $a_i,x_i,y_i$ color $i$,
its edges to the $b_j$ color $\rho$, and its edges to $U_c$ color $c$.
Give all edges inside $W$ color $\rho$.

Fix $c$ and put $D_c=B_c\cup U_c\cup W$, of order $n+m+1$.
No color-$c$ edge of $H$ leaves $B_c$.
For $v\in B_c$, the permutation $\pi_v$ has output $c$ only at input $c$.
For $v\notin B_c$, we have $\pi_v(c)\ne c$.
Hence no edge from $H$ to a set $U_i$ leaves $D_c$ in color $c$.
Lemma~\ref{lem:bulk} excludes such edges between the sets $U_i$.
For $c\in\ZZ_{k-1}$, the color-$c$ neighbors of $W$ lie in
$\{a_c,x_c,y_c\}\cup U_c\subseteq D_c$.
For $c=\rho$, they lie in $B_\rho\cup U_\rho\cup W=D_\rho$.
Thus no color-$c$ edge joins $D_c$ to its complement.

For $v\in V(H)\setminus B_c$, Lemma~\ref{lem:base} gives at most three
color-$c$ neighbors in $H$.
Exactly one input of $\pi_v$ gives output $c$, so the sets $U_i$ add $n-3$ such neighbors.
No edge from $V(H)\setminus B_c$ to $W$ has color $c$. Thus $d_c(v)\le3+(n-3)=n$.
For $v\in U_i$ with $i\ne c$, Lemma~\ref{lem:bulk} gives $n-3$ such neighbors
in other parts. Lemma~\ref{lem:permutations} gives three in $H$.
There are none in $U_i\cup W$, so $d_c(v)=n$.

A color-$c$ copy of $S(n,m)$ would lie entirely in $D_c$ or in its complement,
since $S(n,m)$ is connected and no color-$c$ edge joins these sets.
The set $D_c$ has only $n+m+1$ vertices.
The color-$c$ graph on its complement has maximum degree at most $n$.
Neither can contain $S(n,m)$, which has $n+m+2$ vertices and maximum degree $n+1$.
This holds for every color, so the coloring is $S(n,m)$-free.
\end{proof}

\Needspace{6\baselineskip}
\begin{proof}[Proof of Theorem~\ref{thm:main}]
Theorem~\ref{thm:RS-upper} gives the upper bound.
The matching lower bound follows from
Theorem~\ref{thm:RS-lower} when $k$ is odd and from
Theorem~\ref{thm:ZMZ-even} when both $k$ and $n$ are even.

It remains to consider even $k\ge6$ and odd $n$.
If $n=1$, then $m=1$, and condition~\eqref{eq:condition}
would give $2>k$, a contradiction.
Hence $n\ge3$, and Proposition~\ref{prop:lower}
gives the matching lower bound.
\end{proof}

\section{The exceptional four-color case}\label{sec:four}
\subsection{The lower bound}
\begin{lemma}\label{lem:four-lower}
For odd $n\ge m\ge1$, there is an $S(n,m)$-free four-coloring of $K_{4n+m}$.
\end{lemma}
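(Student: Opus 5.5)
The plan is to imitate the proof of Proposition~\ref{prop:lower} with $k=4$. Since $n$ is odd, $s=n-1$ is even and nonnegative, so Lemma~\ref{lem:bulk} applies to four disjoint sets $U_1,\dots,U_4$ of size $n-1$. In that coloring no edge of color $c$ meets $U_c$, and every vertex of $U_i$, $i\ne c$, has color-$c$ degree exactly $n-1$ inside the bulk. Edges inside $U_c$ get color $c$. This uses $4n-4$ vertices. The remaining $m+4$ vertices form a small core $H$ and a set $W$. I expect to take $|H|=4$ and $|W|=m$, but I am prepared to shift to $|H|=5$ and $|W|=m-1$ if the core below cannot be realized.

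As in Section~\ref{sec:many}, each color $c$ gets a closed block $D_c=B_c\cup U_c\cup W$ of order at most $n+m+1$. Every vertex outside $D_c$ must have color-$c$ degree at most $n$. A monochromatic $S(n,m)$ would then have to lie in $D_c$, which is too small, or outside it, where the maximum degree is too small. This is exactly the final paragraph of the proof of Proposition~\ref{prop:lower}.

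The colors are assigned as follows.
\begin{itemize}
\item Each $w\in W$ joins $U_c$ in color $c$ and joins each core vertex $v$ in a color $c$ with $v\in B_c$.
\item Edges inside $W$ get a color $c_0$ chosen so that $W\subseteq D_{c_0}$ causes no leak.
\item Each $v\in V(H)$ joins $U_c$ in color $\pi_v(c)$, where $\pi_v$ is a permutation of the four colors with $\pi_v(c)=c$ exactly when $v\in B_c$.
\end{itemize}
The degree checks then mirror Lemma~\ref{lem:base} and Lemma~\ref{lem:permutations}, with one change. Because the bulk already supplies $n-1$ neighbors instead of $n-3$, the slack is only one. Hence a vertex of $U_c$ may have at most one color-$d$ neighbor in $H$ for each $d\ne c$, so for each pair $c\ne d$ at most one $v$ satisfies $\pi_v(c)=d$. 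Likewise, a vertex $v\in V(H)\setminus B_c$ may have at most one color-$c$ neighbor inside $H$, and none in $W$. For $|B_c|$, the order of $D_c$ forces $|B_c|\le 1$ when $|W|=m$, and $|B_c|\le2$ when $|W|=m-1$.

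The main obstacle is finding this small core. The difficulty is the interaction between the fixed-point pattern of the $\pi_v$ and the colors used inside $H$. An edge of color $c$ leaving $B_c$ inside $H$ is forbidden. On the other hand, a vertex lying in several $B_c$ forces $\pi_v$ to have several fixed points. The matrix with rows $\pi_v$ must be a partial Latin structure: each off-diagonal pair $(c,d)$ may be hit at most once, and the fixed points must match the incidences of the $B_c$. My first attempt is a $K_4$ core whose six edges split into three perfect matchings. Here $B_c$ is one edge of matching $c$ for $c\le3$, the other edge of that matching is a harmless color-$c$ edge with degree $1$ at its ends, and the fourth color is used only on $W$. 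If the fixed-point bookkeeping fails there, as it may, because $K_4$ has too few vertices to absorb all three matchings, I would pass to the five-vertex core with $|B_c|\in\{1,2\}$. I would then search for permutations $\pi_v$ of $\{1,2,3,4\}$ that realize the required fixed points and hit each off-diagonal pair at most once, for example by starting from the Klein four-group translations and perturbing them as $f_a,f_x,f_y$ were obtained from doubling. Once a consistent core is found, the closure of each $D_c$ and the bound $d_c(v)\le n$ outside $D_c$ are routine counts, exactly as in the proof of Proposition~\ref{prop:lower}.
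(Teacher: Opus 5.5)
\paragraph{There is a genuine gap.} Your reduction is set up correctly, apart from one bound. The problem is that the proposal never exhibits the object the lemma asserts. The core $H$, with its blocks $B_c$, its internal coloring and its permutations $\pi_v$, is left as ``the main obstacle'', so no coloring of $K_{4n+m}$ is actually produced.

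\paragraph{Your first attempt fails as stated.} In $K_4$, two edges from different perfect matchings always meet, so your three blocks $B_1,B_2,B_3$ form either a star or a triangle.
\begin{itemize}
\item A star puts its center in exactly three of the $B_c$ while $B_4=\varnothing$, so $\pi_v$ would need exactly three fixed points, which no permutation of four colors has.
\item A triangle leaves a vertex in no $B_c$, so its edges to $W$ have no admissible color.
\end{itemize}

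\paragraph{Your size bound is off by one.} From $|D_c|=|B_c|+(n-1)+|W|\le n+m+1$ you get $|B_c|\le 2$ when $|W|=m$, and $|B_c|\le 3$ when $|W|=m-1$. Under the bound $|B_c|\le1$ you wrote, a four-vertex core is genuinely impossible. Counting off-diagonal pairs forces $|B_c|=1$ for all $c$, with each vertex in exactly one block. Each color class inside $H$ is then a matching on the three vertices outside $B_c$, so it has at most one edge. That colors at most four of the six edges of $H$. Your search would therefore have been steered away from a core that in fact exists.

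\paragraph{How to complete your route.} Keep the triangle and put the leftover vertex in the fourth block. Let $H=\{p,q,r,s\}$ with
\[
B_1=\{p,q\},\quad B_2=\{p,r\},\quad B_3=\{q,r\},\quad B_4=\{s\}.
\]
Color $pq,rs$ with $1$, $pr,qs$ with $2$, and $qr,ps$ with $3$. In cycle notation, set $\pi_p=(3\,4)$, $\pi_q=(2\,4)$, $\pi_r=(1\,4)$ and $\pi_s=(1\,2\,3)$. Then:
\begin{itemize}
\item the fixed points of each $\pi_v$ are exactly the colors $c$ with $v\in B_c$;
\item the nine off-diagonal pairs hit are all distinct;
\item no color-$c$ edge of $H$ leaves $B_c$;
\item every vertex outside $B_c$ has at most one color-$c$ edge in $H$.
\end{itemize}
Take $|W|=m$, join $W$ to $p,q,r,s$ in colors $1,3,2,4$, and color inside $W$ arbitrarily. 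This gives $|D_c|=n+m+1$ for $c\le 3$ and $|D_4|=n+m$. Every vertex outside $D_c$ has color-$c$ degree at most $n$, so your closing argument goes through.

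\paragraph{Comparison with the paper.} Completed this way, your argument is a genuinely different proof. The paper uses neither Lemma~\ref{lem:bulk} nor permutations. It takes parts $A,B_1,B_2,B_3,B_4$ of sizes $m+1,n,n,n,n-1$, with $A$ playing the role of your $W$. It splits $B_4$ into halves $X,Y$, joined to $B_i$ in colors $i+1$ and $i-1$. Each bipartite graph between $B_i$ and $B_j$ is split into $(n-1)/2$ matchings of color $4$ and $(n+1)/2$ matchings of the third color. Your route reuses the Section~\ref{sec:many} machinery uniformly. The paper's is shorter and self-contained.
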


\begin{proof}
Partition the vertices into $A,B_1,B_2,B_3,B_4$,
where $|A|=m+1$, $|B_1|=|B_2|=|B_3|=n$, and $|B_4|=n-1$.
Split $B_4$ into $X,Y$ of size $(n-1)/2$.
Give edges inside $A$ color $4$.
For each $i$, give edges inside $B_i$ and between $A,B_i$ color $i$.
Read $i+1,i-1$ cyclically on $\{1,2,3\}$.
For $i\in\{1,2,3\}$, give edges between $X,B_i$ color $i+1$
and those between $Y,B_i$ color $i-1$.
For distinct $i,j\in\{1,2,3\}$, decompose the complete bipartite graph
between $B_i,B_j$ into $n$ perfect matchings, using label differences in $\ZZ_n$.
Color $(n-1)/2$ matchings with $4$ and the remaining $(n+1)/2$ with the color in
$\{1,2,3\}\setminus\{i,j\}$.

The set $A\cup B_4$ is a complete color-$4$ component of order $n+m$.
Each vertex outside it has $(n-1)/2$ color-$4$ neighbors in each of two other large parts.
Its color-$4$ degree is therefore $n-1$.
For $c\in\{1,2,3\}$, the set $A\cup B_c$ is connected in color $c$,
has order $n+m+1$, and has no color-$c$ edge to its complement.
Outside it, a vertex of $X\cup Y$ has $n$ color-$c$ neighbors in one large part.
A vertex in either remaining large part has $(n-1)/2$ such neighbors in $B_4$
and $(n+1)/2$ in the other large part, giving degree $n$.
In every color, the specified component is too small to contain $S(n,m)$,
and all other components have maximum degree at most $n$.
\end{proof}

\subsection{Degrees and component orders}

If an $S(n,m)$-free graph $H$ has an edge $uv$ with
$d_H(u)\ge n+1$ and $d_H(v)\ge m+1$, then
\begin{equation}\label{eq:neighborhood}
 |N_H(u)\cup N_H(v)|\le n+m+1.
\end{equation}
Indeed, put $U=N_H(u)\setminus\{v\}$ and $W=N_H(v)\setminus\{u\}$.
If the inequality fails, then $|U\cup W|\ge n+m$.
Choose $m$ vertices of $W$, taking those in $W\setminus U$ first.
If $|W\setminus U|\ge m$, no vertex of $U$ is used.
Otherwise, $|U|-(m-|W\setminus U|)=|U\cup W|-m\ge n$ vertices of $U$ remain.
We can therefore choose disjoint leaf sets of sizes $n,m$, giving an $S(n,m)$.

For the rest of this section, let $n,m$ satisfy Theorem~\ref{thm:four}.
Suppose $K_N$ has an $S(n,m)$-free four-coloring, where $N=4n+m+1$.
Put $V=V(K_N)$. We have $n>2m+2$.

\begin{lemma}\label{lem:degrees}
For every $v\in V$ and color $c$,
\begin{equation}\label{eq:degrees}
 n-2m\le d_c(v)\le n+m.
\end{equation}
\end{lemma}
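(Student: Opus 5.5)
The plan is to prove the upper bound first, and then get the lower bound from it by pigeonhole. Throughout, $N-1=4n+m$ is the degree of each vertex in $K_N$.

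\emph{Lower bound from the upper bound.} Suppose $d_c(v)<n-2m$ for some $v$ and $c$. Then the other three colors at $v$ have total degree greater than $4n+m-(n-2m)=3n+3m$. Hence some color $d$ satisfies $d_d(v)\ge n+m+1$, which contradicts the upper bound. So it suffices to show $d_c(v)\le n+m$ for all $v$ and $c$.

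\emph{A consequence of~\eqref{eq:neighborhood}.} Call $v$ \emph{high} in color $c$ if $d_c(v)\ge n+m+1$, and \emph{low} in color $c$ if $d_c(v)\le m$. Let $v$ be high in $c$ and let $u$ be a color-$c$ neighbor of $v$. If $d_c(u)\ge m+1$, then~\eqref{eq:neighborhood} applies to the edge $vu$ in $G_c$, which is $S(n,m)$-free. But $N_c(u)\cup N_c(v)$ contains $N_c(v)$ together with $v$ itself, so it has at least $n+m+2$ vertices, a contradiction. Therefore every color-$c$ neighbor of a high vertex is low in $c$.

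\emph{Double counting.} Let $H_c$ and $L_c$ be the sets of vertices that are high and low in color $c$. Every color-$c$ edge leaving $H_c$ ends in $L_c$, and each vertex of $L_c$ is incident to at most $m$ color-$c$ edges. Counting these edges gives
\[
|H_c|(n+m+1)\le m|L_c|\le mN,\qquad\text{so}\qquad |H_c|\le \frac{m(4n+m+1)}{n+m+1}<4m .
\]
Next take any $u\in L_c$. Its other three colors have total degree at least $4n+m-m=4n$. So some color $d\ne c$ has $d_d(u)\ge 4n/3$. Since $n\ge15m$, we have $4n/3\ge n+5m\ge n+m+1$, so $u\in H_d$. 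Consequently
\[
L_c\subseteq \bigcup_{d}H_d,\qquad\text{so}\qquad |L_c|<16m .
\]

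\emph{Contradiction.} Now suppose some $v$ has $d_c(v)\ge n+m+1$. By the consequence of~\eqref{eq:neighborhood}, $N_c(v)\subseteq L_c$. This gives $n+m+1\le |L_c|<16m$, which contradicts $n\ge15m$. Hence $d_c(v)\le n+m$, and the lower bound follows as explained above.

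The step needing most care is the arithmetic in the double count: checking $4n/3\ge n+m+1$ and $4m(4n+m+1)/(n+m+1)<n+m+1$ under $n\ge15m$. Both hold with room to spare.
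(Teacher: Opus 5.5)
Your proof is correct and follows essentially the paper's approach: the same high/low sets, the same use of \eqref{eq:neighborhood} to show that every color-$c$ neighbor of a high vertex is low, and the same fact that a low vertex is high in some other color. The only difference is the closing count. The paper aggregates over all colors, $(n+m+1)|F|\le m\sum_c|L_c|\le 3m|F|$ with $F=\bigcup_c F_c$, which needs only $n>3m$; your two-stage bound ($|H_c|<4m$ from the crude $|L_c|\le N$, then $|L_c|<16m$) uses the full hypothesis $n\ge15m$, which is available in this section.
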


\begin{proof}
We use a high- and low-degree count as in~\cite[Section~2]{BDO}.
For each color $c$, let $F_c=\{v\in V:d_c(v)\ge n+m+1\}$
and $L_c=\{v\in V:d_c(v)\le m\}$.
Put $F=\bigcup_{c=1}^4 F_c$.
For $u\in F_c$, every color-$c$ neighbor $v$ belongs to $L_c$.
Otherwise, \eqref{eq:neighborhood} applies to $uv$.
The union in that inequality contains $N_{G_c}[u]$, whose order is at least $n+m+2$,
a contradiction.

We also have $L_c\subseteq F$ for every $c$.
Indeed, a vertex $v\in L_c\setminus F$ would satisfy
$4n+m=\sum_{d=1}^4 d_d(v)\le m+3(n+m)<4n+m$, since $n>3m$.
Suppose $F\ne\varnothing$.
Every vertex of $F$ belongs to at least one $F_c$ and at most three sets $L_c$.
Count the color-$c$ edges incident with $F_c$ at their two ends.
All their other endpoints lie in $L_c$ and have color-$c$ degree at most $m$.
Summing over the four colors gives
\[
 (n+m+1)|F|
 \le \sum_{c=1}^4\sum_{v\in F_c}d_c(v)
 \le m\sum_{c=1}^4|L_c|
 \le 3m|F|.
\]
This contradicts $n+m+1>3m$.
Hence $F=\varnothing$, so $d_c(v)\le n+m$ for every $v$ and $c$.
Subtracting the other three upper bounds from $\sum_c d_c(v)=4n+m$
gives $d_c(v)\ge n-2m$.
\end{proof}

Call a monochromatic component \emph{high-degree} if it contains a vertex
of degree at least $n+1$ in its color.

\begin{lemma}\label{lem:orders}
Every high-degree component has $n+m+1$ vertices.
\end{lemma}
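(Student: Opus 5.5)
The plan is to fix a high-degree component $C$ of some color $c$ and a vertex $u\in C$ with $d_c(u)\ge n+1$. I would prove the two inequalities $|C|\le n+m+1$ and $|C|\ge n+m+1$ separately, working throughout in $G_c$ and using Lemma~\ref{lem:degrees} together with inequality~\eqref{eq:neighborhood}. The basic observation is this. Every vertex $v$ satisfies $d_c(v)\ge n-2m\ge m+1$, because $n\ge15m$. Hence~\eqref{eq:neighborhood} applies to \emph{every} color-$c$ edge $uv$ at $u$, giving
\[
 |N_c(u)\cup N_c(v)|\le n+m+1 \qquad\text{for all } v\in N_c(u).
\]
Since $|N_c[u]|\ge n+2$, each neighbor $v$ of $u$ has at most $m-1$ color-$c$ neighbors outside $N_c[u]$.

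For the upper bound I would show that $C$ is contained in a single set of order at most $n+m+1$. Put $Z=N_c[u]$. Choose a neighbor $v$ of $u$ and let $S=N_c(u)\cup N_c(v)$, so $|S|\le n+m+1$ and $S\supseteq Z$. I would then show that every color-$c$ edge leaving $Z$ ends in $S$. Suppose instead that $w\in N_c(v')\setminus S$ for some other neighbor $v'$ of $u$. Then a double star centered at $u$ and $v'$ can be rebuilt using leaves from $S\setminus N_c(u)$ together with $w$, and this produces too many vertices. Next I would show that vertices outside $Z$ but in $C$ have all their color-$c$ neighbors inside $S$. Such a vertex $w$ has $d_c(w)\ge n-2m$, which is far more than $m-1$. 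So $w$ must have many neighbors in $Z$, in particular some neighbor $v''$ of $u$. Applying~\eqref{eq:neighborhood} to $uv''$ then confines $N_c(w)$ to $N_c(u)\cup N_c(v'')$. A short exchange argument shows that these unions all coincide with $S$. Hence $C\subseteq S$ and $|C|\le n+m+1$.

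For the lower bound, suppose $|C|\le n+m$. Every vertex of $C$ has color-$c$ degree at most $|C|-1\le n+m-1$. More importantly, all edges from $C$ to the $\ge 3n+1$ vertices outside $C$ use the other three colors. I would then run a counting argument in the spirit of Lemma~\ref{lem:degrees}. Vertices of $C$ have color-$c$ degree at most $|C|-1$, so their degrees in the other three colors sum to at least $4n+m-|C|+1$. By Lemma~\ref{lem:degrees} each of these three degrees is at most $n+m$, so each is also at least $n-m$. Now consider a vertex $u$ and a neighbor $v$ of large degree in another color $c'$. Feeding this into~\eqref{eq:neighborhood} for $c'$ should force the color-$c'$ components meeting $C$ to have restricted structure. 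Comparing the total vertex count $4n+m+1$ with the component orders should then give a contradiction.

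I expect this lower bound to be the main obstacle. The upper bound is a local consequence of~\eqref{eq:neighborhood}, whereas a component that is too small can only be excluded globally, by tracking how the remaining $N-|C|$ vertices are covered by the other colors. My first attempt would be a double count of edges between $C$ and $V\setminus C$ in colors $c'\ne c$. This uses that $u$ has only $d_{c'}(u)\le 3n+m-d_c(u)\le 2n+m-1$ edges of other colors available in total, which is less than $|V\setminus C|\ge 3n+m+1$. That would be a contradiction, since every vertex outside $C$ must receive an edge from $u$ in some color $c'\ne c$.
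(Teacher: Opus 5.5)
Both halves of your plan have genuine gaps. The upper-bound argument fails, and in fact no argument using only color $c$ can succeed. Inequality \eqref{eq:neighborhood} applied to an edge $uv$ controls only $N_c(u)\cup N_c(v)$. It says nothing about $N_c(w)$ for $w$ at distance two from $u$, because neither $w$ nor its neighbor in $N_c(u)$ need have color-$c$ degree at least $n+1$. So applying \eqref{eq:neighborhood} to $uv''$ cannot confine $N_c(w)$. Nothing forces $w$ to have many neighbors in $Z$; vertices at distance three have none. In your exchange step, the leaves taken from $S\setminus N_c(u)$ are neighbors of $v$, not of $v'$.

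Here is a concrete obstruction. Let $u$ have neighbors $v_1,\dots,v_{n+1}$ spanning an $(n-2m)$-regular graph. Give $v_1$ and $v_2$ one further neighbor each, $w_1\ne w_2$, and let $w_1,w_2$ lie in a large $(n-2m)$-regular graph. All degrees lie in $[n-2m,n+m]$, and only $u$ has degree at least $n+1$. A copy of $S(n,m)$ would therefore need centers $u$ and some $v_i$, with $n+m$ leaves in $(N_c(u)\cup N_c(v_i))\setminus\{u,v_i\}$. That set has at most $n+1$ vertices, so since $m\ge2$ there is no copy. Yet the component of $u$ is arbitrarily large, and no set $N_c(u)\cup N_c(v)$ contains both $w_1$ and $w_2$. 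The lemma genuinely depends on the other three colors and on $N=4n+m+1$.

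Your lower-bound sketch does not reach a contradiction either. Because $N_c(u)\subseteq C\setminus\{u\}$, the vertex $u$ has $4n+m-d_c(u)\ge N-|C|$ edges of the other colors, which is always enough to reach every vertex outside $C$. You used $3n+m$ in place of the total degree $N-1=4n+m$. Also, $|V\setminus C|$ is only guaranteed to be at least $3n+1$, not $3n+m+1$.

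The missing idea, which gives both inequalities at once, is to find a vertex $v\in D=N_c[u]$ with $d_c(v)=n+m$ exactly. For each $w\in N_c(v)$, \eqref{eq:neighborhood} then gives $N_c(v)\cup N_c(w)=N_c[v]$. Hence $N_c[v]$ has order $n+m+1$, no color-$c$ edge leaves it, and it is the component of $u$. The paper finds $v$ by a global count:
\begin{itemize}
\item Each $x\in D$ has at most $n+m+1-|D|$ color-$c$ neighbors outside $D$, hence at least $3n$ neighbors outside $D$ in the other three colors.
\item For $d\ne c$, let $Z_d$ be the set of $x\in D$ with at least $n$ color-$d$ neighbors outside $D$ and $d_d(x)\ge n+1$. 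By \eqref{eq:neighborhood} in color $d$, each outside vertex has at most $m+1$ color-$d$ neighbors in $Z_d$, so $n|Z_d|\le(m+1)(N-|D|)$.
\item Thus $\bigcup_{d\ne c}Z_d$ has fewer than $n$ vertices. Any $v\in D$ outside it satisfies $d_d(v)=n$ for every $d\ne c$, hence $d_c(v)=n+m$.
\end{itemize}
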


\begin{proof}
Take $u$ with $d_c(u)\ge n+1$ and put $D=N_{G_c}[u]$.
By~\eqref{eq:degrees}, $n+2\le|D|\le n+m+1$.
Every vertex has color-$c$ degree at least $n-2m\ge m+1$.
For $x\in N_{G_c}(u)$, we have $d_c(x)\ge n-2m\ge m+1$.
Applying~\eqref{eq:neighborhood} to the edge $ux$ gives
\[
|N_{G_c}(u)\cup N_{G_c}(x)|\le n+m+1.
\]
Since $ux\in E(G_c)$, we have $
 D=N_{G_c}(u)\cup\{u\}
 \subseteq N_{G_c}(u)\cup N_{G_c}(x)$.
Consequently,
\[
 |N_{G_c}(x)\setminus D|
 \le n+m+1-|D|.
\]
The same bound holds for $x=u$, whose color-$c$ neighbors all lie in $D$.
Thus each $x\in D$ has at least $N-|D|-(n+m+1-|D|)=3n$ neighbors
outside $D$ in the other three colors combined.

For $d\ne c$, let $Z_d$ consist of the vertices $x\in D$ with at least $n$
color-$d$ neighbors outside $D$ and with $d_d(x)\ge n+1$.
Each $y\notin D$ has at most $m+1$ color-$d$ neighbors in $Z_d$.
Suppose instead that $y$ has at least $m+2$ such neighbors, and choose one of them, $x$.
We have $d_d(x)\ge n+1$ and $d_d(y)\ge m+2$.
The sets $N_{G_d}(x)\setminus D$ and $N_{G_d}(y)\cap D$ are disjoint.
Thus~\eqref{eq:neighborhood} is contradicted by
\[
 |N_{G_d}(x)\cup N_{G_d}(y)|
 \ge |N_{G_d}(x)\setminus D|+|N_{G_d}(y)\cap D|
 \ge n+m+2.
\]

Counting color-$d$ edges between $Z_d$ and $V\setminus D$ gives
$n|Z_d|\le(m+1)(N-|D|)$.
Since $|D|\ge n+2$, it follows that
\begin{equation}\label{eq:exceptional}
 \left|\bigcup_{d\ne c}Z_d\right|
 \le\frac{3(m+1)(3n+m-1)}{n}<15m\le n.
\end{equation}
Indeed, $3(3n+m-1)<10n$ and $10(m+1)\le15m$.
Choose $v\in D\setminus\bigcup_{d\ne c}Z_d$, since $|D|\ge n+2$.
For each $d\ne c$, the vertex $v$ has at most $n$ color-$d$ neighbors
outside $D$: otherwise its total color-$d$ degree would also be at least
$n+1$, placing it in $Z_d$.
These three outside degrees sum to at least $3n$, so each equals $n$.
Since the definition of $Z_d$ uses an outside degree of at least $n$,
the fact that $v\notin Z_d$ now implies $d_d(v)\le n$.
Consequently $d_d(v)=n$ for every $d\ne c$, and
$d_c(v)=4n+m-3n=n+m$.

Put $D'=N_{G_c}[v]$, so $|D'|=n+m+1$.
For every color-$c$ neighbor $w$ of $v$, we have $d_c(w)\ge m+1$ and
$D'\subseteq N_{G_c}(v)\cup N_{G_c}(w)$.
Equation~\eqref{eq:neighborhood} bounds this union by $n+m+1$,
so it equals $D'$. In particular, $N_{G_c}(w)\subseteq D'$.
The color-$c$ neighbors of $v$ also lie in $D'$ by definition.
Thus no color-$c$ edge leaves $D'$, and $G_c[D']$ is connected because
$v$ is adjacent in color $c$ to every other vertex of $D'$.
Therefore $D'$ is the component of $v$.
Since $v\in N_{G_c}[u]$, it is also the component of $u$.
\end{proof}

\subsection{Component intersections}
Let $\cB$ be the family of all high-degree monochromatic components,
each identified with its vertex set and color. Put $r=|\cB|$.
For $v\in V$, let $\sigma(v)$ be the set of colors of the members of $\cB$ containing $v$.
We call $\sigma(v)$ the \emph{support} of $v$.
Its size counts those members, since components of the same color are disjoint.
A vertex is \emph{private} if it belongs to exactly one member.
Vertices of support size two, three, or four are called \emph{double},
\emph{triple}, or \emph{quadruple}, respectively.

If $c\notin\sigma(v)$, then $d_c(v)\le n$; otherwise the color-$c$
component containing $v$ would be high-degree, giving $c\in\sigma(v)$.
Conversely, membership in a high-degree component does not require
$v$ itself to have degree at least $n+1$ in that color.
Every support is nonempty, since the color degrees sum to $4n+m>4n$.
Thus the members of $\cB$ cover $V$.
Three components cannot cover $V$, as $3(n+m+1)<N$.
No color has three or more high-degree components.
Indeed, removing any three such components would leave
$N-3(n+m+1)=n-2m-2>0$ vertices.
None of their edges in that color could enter the removed components,
so their degrees in that color would be at most $n-2m-3$,
contrary to~\eqref{eq:degrees}.
Thus every color has at most two high-degree components, and $4\le r\le8$.

If $u$ is private to a component $B$ of color $d$, then
$d_d(u)\le n+m$ by~\eqref{eq:degrees}, and $d_c(u)\le n$ for $c\ne d$
by the preceding observation.
The sum of these four upper bounds is $4n+m$, the total degree of $u$,
so all four are attained: $d_d(u)=n+m$ and $d_c(u)=n$ for $c\ne d$.
All color-$d$ neighbors of $u$ lie in $B$, and $|B\setminus\{u\}|=n+m$.
Hence every edge from $u$ to $B\setminus\{u\}$ has color $d$.

No two vertices have complementary supports.
Indeed, in each color one endpoint would lie in a high-degree component
and the other outside all such components.
Their edge could receive no color.

\begin{lemma}\label{lem:large-count}
There are exactly four high-degree components.
\end{lemma}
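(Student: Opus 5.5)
The plan is to show $r\le4$. Since the members of $\cB$ cover $V$ and three of them cannot, we already have $r\ge4$, so this gives $r=4$. Suppose instead that $r\ge5$. Because each color has at most two high-degree components, some color, say color $1$, then has two high-degree components $B$ and $B'$. By Lemma~\ref{lem:orders}, both have order $n+m+1$, and they are disjoint. Put $R=V\setminus(B\cup B')$, so $|R|=2n-m-1$.

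The first step is to analyse the edges between $B$ and $B'$. None of them has color $1$, so all $(n+m+1)^2$ of them carry colors $2,3,4$. For a color $d\ne1$, split the color-$d$ edges between $B$ and $B'$ into two kinds.
\begin{itemize}
\item Edges inside a high-degree component $C$ of color $d$. There are at most $|C\cap B|\,|C\cap B'|\le\bigl((n+m+1)/2\bigr)^2$ of them for each such $C$. Moreover, there are at most two such $C$ per color, and they are disjoint.
\item Edges inside a component that is not high-degree. There every vertex has color-$d$ degree at most $n$.
\end{itemize}
In parallel, each $x\in B$ has at most $n+m$ color-$1$ neighbours, all inside $B$. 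Hence $x$ sends at least $3n$ edges in colors $2,3,4$ to $V\setminus B$. This forces $d_d(x)$ to be close to $n$ for every $d\ne1$, and it forces $x$ to lie in high-degree components of the colors it uses heavily.

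The second step is to use the private-vertex structure together with the rule against complementary supports. A private vertex $u$ of $B$ has $d_c(u)=n$ for every $c\ne1$, and its color-$1$ neighbourhood is exactly $B\setminus\{u\}$. Consequently its $3n$ edges to $V\setminus B$ are split exactly $n,n,n$ among colors $2,3,4$. I will count supports through the identity
\[
\sum_{v\in V}|\sigma(v)| = r(n+m+1).
\]
When $r\ge5$ this makes the average support size exceed $5(n+m+1)/(4n+m+1)>1$. I will then combine this with the fact that the vertices of $B$ and of $B'$ both contain $1$ in their supports but lie in different color-$1$ components. The aim is to show that the colors $2,3,4$ cannot supply enough high-degree components. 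Concretely, I expect to derive that the high-degree components of colors $2,3,4$ meeting $B\cup B'$ would need total order exceeding what is available, namely at most $2$ per color, each of order $n+m+1$.

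The main obstacle is the first step. The edge count between $B$ and $B'$ must be made sharp enough to contradict $(n+m+1)^2$ while losing only $O(m)$ per vertex. I would first try a \emph{$Z_d$}-type argument as in the proof of Lemma~\ref{lem:orders}: bound the vertices of $B$ whose color-$d$ degree exceeds $n$ using~\eqref{eq:neighborhood}. The remaining vertices then contribute at most $n$ edges per color. Comparing this total with $3n(n+m+1)$ and using $n\ge15m$ should yield the contradiction.
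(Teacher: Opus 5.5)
There is a genuine gap. The step you single out as the main one cannot produce a contradiction, and the ingredient that actually drives the proof is missing. Since $|V\setminus B|=N-(n+m+1)=3n$, every $x\in B$ has exactly $3n$ edges leaving $B$, all in colors $2,3,4$. That is exactly what color degree $n$ in each of these colors permits; it is the degree profile of a private vertex, which is locally consistent. So if you bound the exceptional vertices by a $Z_d$-type argument and give the rest at most $n$ edges per color, you get at most about $3n(n+m+1)$ edges. That is the exact number of edges leaving $B$, and it is far more than the $(n+m+1)^2$ edges between $B$ and $B'$, so there is no slack to contradict. Likewise, your bound $|C\cap B|\,|C\cap B'|\le((n+m+1)/2)^2$, summed over up to six components, already exceeds $(n+m+1)^2$. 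Local edge counts at $B$ and $B'$ do not see the obstruction. Your second step is stated only as an expectation. It also uses a private vertex of $B$ whose existence you have not established, and in the paper that existence follows only after intersections are bounded.

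The missing idea is an $O(m)$ bound on pairwise intersections of high-degree components. Take $v\in B\cap C$ with colors $a\ne b$. Then $N_{G_a}(v)$ and $N_{G_b}(v)$ are disjoint subsets of $(B\cup C)\setminus\{v\}$. Add the bounds $n+m$ for the other colors in $\sigma(v)$ and $n$ for the colors outside it, and compare with $4n+m=\sum_d d_d(v)$. This gives $|B\cap C|\le(|\sigma(v)|-1)m+1\le3m+1$.

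Your support identity is the right global quantity to pair with this. For $r\ge5$, the excess $\sum_v(|\sigma(v)|-1)$ over any five members is at least $5(n+m+1)-N=n+4m+4$. It is also at most the sum of their pairwise intersections, so these must total order $n$.

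To make the numbers work for $n\ge15m$, the paper sharpens the intersection bound in three steps:
\begin{itemize}
\item The four components of a quadruple vertex cover $V$, so a fifth component would have order at most $3(3m+1)$. Hence supports have size at most $3$, and intersections are at most $2m+1$.
\item This bound forces private vertices in every component.
\item If all four colors occur, a triple vertex would have support complementary to a private vertex of the missing color. Hence intersections are at most $m+1$.
\end{itemize}
Then Bonferroni on five components gives a union of order at least $5(n+m+1)-9(m+1)$ or $5(n+m+1)-8(2m+1)$. Both exceed $N$, which rules out $r\ge5$.
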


\begin{proof}
Let $B,C\in\cB$ be distinct components with nonempty intersection,
and take $v\in B\cap C$. Their colors, say $a,b$, are distinct.
Put $s=|\sigma(v)|$ and $I=|B\cap C|$.
The color-$a$ and color-$b$ neighbor sets of $v$ lie in $B$ and $C$,
respectively, and are disjoint. Therefore
\[
 d_a(v)+d_b(v)
 \le |(B\cup C)\setminus\{v\}|=2(n+m)+1-I.
\]
Among the remaining colors, the $s-2$ colors in $\sigma(v)$ have degree
at most $n+m$, while the $4-s$ colors outside $\sigma(v)$ have degree
at most $n$. Consequently,
\[
\begin{aligned}
 4n+m=\sum_{d=1}^4d_d(v)
 &\le 2(n+m)+1-I+(s-2)(n+m)+(4-s)n\\
 &=4n+sm-(I-1).
\end{aligned}
\]
It follows that
\begin{equation}\label{eq:intersection-bound}
 |B\cap C|\le (|\sigma(v)|-1)m+1
 \qquad\text{for every }v\in B\cap C.
\end{equation}
Since $|\sigma(v)|\le4$, we obtain $|B\cap C|\le3m+1$.
This bound also holds when $B$ and $C$ are disjoint.

Suppose $r\ge5$.
A quadruple vertex would belong to four components covering $V$:
the color of its edge to any other vertex places that vertex in one of them.
An additional component is disjoint from the selected component of its own color.
It meets each of the other three in at most $3m+1$ vertices.
Its order would then satisfy $n+m+1\le3(3m+1)$, contrary to $n\ge15m$.
Thus no quadruple vertex exists, so $|\sigma(v)|\le3$ for every $v$.
Equation~\eqref{eq:intersection-bound} now gives $|B\cap C|\le2m+1$
for all distinct $B,C\in\cB$.

Each $B\in\cB$ meets at most six other components, since each of the other three colors
has at most two high-degree components.
It therefore has at least $n+m+1-6(2m+1)=n-11m-5$ private vertices.
This is positive because $n-11m-5\ge4m-5>0$.

Choose any five high-degree components.
Their union has order at least the sum of their orders minus the sum of their pairwise intersections.
Indeed, a vertex in $j\ge1$ of the five sets contributes $j-\binom j2\le1$ to this difference.

Suppose first that all four colors have high-degree components.
A triple vertex would have support complementary to a private vertex in the missing color.
Hence there are no triple vertices either, so every support has size at most two.
Equation~\eqref{eq:intersection-bound} gives $|B\cap C|\le m+1$.
Among the five chosen components, at least two have the same color and are disjoint.
At most nine pairs therefore intersect, so their union has order at least
\[
 5(n+m+1)-9(m+1)=5n-4m-4>N,
\]
since $n-5m-5\ge10m-5>0$.
This is a contradiction.

Otherwise, high-degree components occur in at most three colors.
Among the five chosen components, at least two pairs have the same color.
Thus at most eight pairs intersect, and their union has order at least
\[
 5(n+m+1)-8(2m+1)=5n-11m-3>N,
\]
since $n-12m-4\ge3m-4>0$.
This is also a contradiction, so $r\le4$.
Together with $r\ge4$, this proves the lemma.
\end{proof}

\subsection{Completion of the upper bound}

\begin{lemma}\label{lem:components}
There is exactly one high-degree component in each color.
Each has a private vertex.
\end{lemma}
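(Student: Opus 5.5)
The plan is to combine the facts already available: $r=4$ by Lemma~\ref{lem:large-count}, every high-degree component has order $n+m+1$ (Lemma~\ref{lem:orders}), each color has at most two high-degree components, the members of $\cB$ cover $V$, no two vertices have complementary supports, and private vertices are rigid. The rigidity statement is that a vertex $u$ private to $B$ of color $d$ has $d_d(u)=n+m$, all its color-$d$ edges inside $B$, every edge from $u$ to $B\setminus\{u\}$ of color $d$, and $d_c(u)=n$ for $c\ne d$.

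First I would record a covering identity. Since the four members of $\cB$ cover $V$, we have
\[
\sum_{v\in V}\bigl(|\sigma(v)|-1\bigr)=4(n+m+1)-N=3m+3 .
\]
Hence all but at most $3m+3$ vertices are private, and every pairwise intersection has size at most $3m+3$. In particular each member of $\cB$ has at least $n+m+1-(3m+3)>0$ private vertices, since $n\ge15m$. This settles the second assertion once the first is known; it in fact settles it unconditionally.

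The main step, and the expected obstacle, is to rule out a color $c$ with no high-degree component. In that case four components use at most three colors, so some color $a$ has two disjoint components $B_1,B_2$. The remaining two members have colors $\{b,b\}$ or $\{b,d\}$ with $b,d\ne a,c$. Take $u$ private to $B_1$. Its color-$a$ neighbors are exactly $B_1\setminus\{u\}$, so its remaining $3n$ edges go to $V\setminus B_1$. That set has exactly $N-(n+m+1)=3n$ vertices, so $u$ is joined to all of it in colors other than $a$.

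Now let $e\ne a$ be a color of some member $C\in\cB$. A color-$e$ neighbor of $u$ inside $C$ would force $u\in C$, contradicting privacy. So the $n$ color-$e$ neighbors of $u$ avoid every color-$e$ member of $\cB$. By the identity above, the vertices outside $B_1$ and outside all color-$e$ members of $\cB$ number roughly $N-(n+m+1)-(\text{one or two components})+O(m)$.

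\begin{itemize}
\item In the case $\{b,b\}$, these vertices lie in $B_2$ up to $O(m)$ overlap. So $u$ sends about $n$ color-$b$ edges into $B_2$. Symmetrically, private vertices of $B_2$ send color-$b$ edges into $B_1$. Combining this with the fact that private vertices of the color-$b$ components see everything outside their component in colors $c,d$ only, I would count color-$b$ edges between $B_1$ and $B_2$ against the degree bounds~\eqref{eq:degrees}. The aim is to derive either a vertex of color-$b$ degree at least $n+1$ outside the $b$-components, or a complementary pair of supports.
\item In the case $\{b,d\}$, the same bookkeeping applies with colors $b$ and $d$ separately. Here a pair of vertices private to $B_1$ and to the $b$-component, together with a double vertex, is the natural candidate for a complementary-support contradiction. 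If that fails, I would fall back on counting $c$-degrees, which are all at most $n$ since color $c$ has no high-degree component, against the forced totals $3n$.
\end{itemize}

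I expect the careful $O(m)$ error tracking in this case analysis to be the delicate part.

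Once each of the four colors has at least one high-degree component, $r=4$ forces exactly one per color. The private vertices then come from the covering identity, or alternatively from~\eqref{eq:intersection-bound}: each component meets the other three in at most $3m+1$ vertices each, leaving at least $n+m+1-3(3m+1)>0$ private vertices.
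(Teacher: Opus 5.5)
Your handling of the private vertices via $\sum_v(|\sigma(v)|-1)=4(n+m+1)-N=3m+3$ is correct and matches the paper. Reducing the first assertion to ``no color carries two members of $\cB$'' is also fine. But that reduction is the entire content of the lemma, and you do not prove it. The case analysis consists of approximate counts (``about $n$'', ``$O(m)$ overlap'') and stated aims rather than an argument.

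The mechanisms you propose also cannot close it. In both of your cases the color $c$ lies in no support. So the complement of every support contains $c$ and is not itself a support, which means a complementary pair can never be produced; that route is dead.

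In the case $\{b,d\}$, your chosen vertex gives no contradiction. If $u$ is private to $B_1$ and $C$ is the color-$b$ member, the color-$b$ neighbors of $u$ need only lie in $V\setminus(B_1\cup C)$. That set has $2n-m-1+|B_1\cap C|>n$ vertices.

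In the case $\{b,b\}$, with color-$b$ members $C_1,C_2$, your vertex does work if you count exactly. Its color-$b$ neighbors lie in $B_2\setminus(C_1\cup C_2)$, which forces $|B_2\cap(C_1\cup C_2)|\le m+1$. The same bound holds for $B_1$, while the two intersections must sum to $4(n+m+1)-N=3m+3$.

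The missing idea is to use private vertices of the \emph{other} two members and look at their neighbors in the doubled color $a$. Put $R=V\setminus(B_1\cup B_2)$, so $|R|=2n-m-1$, and $R$ is covered by the two remaining members $C,D$, whose colors differ from $a$.

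Take $w$ private to $C$; then $d_a(w)=n$. None of these $n$ neighbors lies in $B_1\cup B_2$, since that would place $w$ in a color-$a$ member. None lies in $C$, since every edge from $w$ into $C$ has the color of $C$. Hence $n\le|R\setminus C|$, that is, $|R\cap C|\le n-m-1$, and likewise $|R\cap D|\le n-m-1$. This gives $2n-m-1\le 2n-2m-2$, a contradiction.

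This handles $\{b,b\}$ and $\{b,d\}$ uniformly and does not even use the missing color $c$. It is exactly the paper's proof.
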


\begin{proof}
Lemma~\ref{lem:large-count} gives $r=4$.
Counting incidences yields
\begin{equation}\label{eq:excess}
 \sum_{v\in V}(|\sigma(v)|-1)=4(n+m+1)-N=3(m+1).
\end{equation}
Every nonprivate vertex contributes at least one.
Each component therefore has at least $n+m+1-3(m+1)=n-2m-2>0$ private vertices.
Suppose color $1$ has two components $A_1,A_2$.
The other two components, $B,C$, have colors different from $1$.
Put $R=V\setminus(A_1\cup A_2)$, so $|R|=2n-m-1$ and $R\subseteq B\cup C$.
A private vertex $u$ of $B$ has $n$ color-$1$ neighbors.
None lies in $A_1\cup A_2$, since those are color-$1$ components.
None lies in $B$, since every edge from $u$ to the rest of $B$ has the color of $B$.
Thus $n\le|R\setminus B|$, giving $|R\cap B|\le n-m-1$.
The same argument gives $|R\cap C|\le n-m-1$.
It follows that
\[
 2n-m-1=|R|\le|R\cap B|+|R\cap C|\le2n-2m-2,
\]
a contradiction. The four components have distinct colors.
\end{proof}

\begin{proof}[Proof of Theorem~\ref{thm:four}]
Lemma~\ref{lem:four-lower} gives the lower bound.
For the upper bound, use the coloring fixed above.
Its high-degree components are $B_1,B_2,B_3,B_4$, with $B_c$ of color $c$.
Each has a private vertex by Lemma~\ref{lem:components}.
There is no triple vertex, since its support would be complementary
to that of a private vertex in the missing color.

Let $t$ and $D_2$ count quadruple and double vertices, respectively.
Since there are no triple vertices, equation~\eqref{eq:excess} gives
$D_2+3t=3(m+1)$, and hence $D_2=3(m+1-t)$.
The six two-element subsets of $\{1,2,3,4\}$ form three complementary pairs.
At most one support from each pair can occur, so there are at most three
distinct double-vertex supports.
If $v$ is a double vertex with support $\{a,b\}$, its color-$a$ and color-$b$
neighbor sets are disjoint subsets of
$(B_a\cup B_b)\setminus\{v\}$. Hence
\[
 d_a(v)+d_b(v)\le2(n+m)+1-|B_a\cap B_b|.
\]
The other two color degrees are at most $n$, so
$4n+m\le4n+2m+1-|B_a\cap B_b|$.
Thus $|B_a\cap B_b|\le m+1$.
This intersection consists of the double vertices with support $\{a,b\}$
and the $t$ quadruple vertices, since no triple vertex exists.
There are therefore at most $m+1-t$ double vertices with this support.
Suppose first that $D_2>0$, so $m+1-t>0$, and let $p\le3$ be the
number of double-vertex supports that occur. Summing the preceding bound
over these supports gives
\[
 3(m+1-t)=D_2\le p(m+1-t)\le3(m+1-t).
\]
Thus $p=3$, and each of the three supports occurs exactly $m+1-t$ times.
For every occurring support $\{a,b\}$, we consequently have
$|B_a\cap B_b|=(m+1-t)+t=m+1$.
The preceding neighbor bound gives $d_a(v)+d_b(v)\le2n+m$.
Its other two degrees sum to at least $2n$ and are each at most $n$,
so both equal $n$.
Private vertices also have degree $n$ in each color outside their support.
If $D_2=0$, every nonquadruple vertex is private, so the same conclusion holds.
In either case, every vertex has degree $n$ in each color outside its support.

Fix a color $c$.
Every vertex outside $B_c$ is private or double and has color-$c$ degree $n$.
No edge of color $c$ leaves $B_c$.
Thus $G_c[V\setminus B_c]$ is $n$-regular on $N-(n+m+1)=3n$ vertices.
Since $n$ is odd, the degree sum $3n^2$ is odd,
contradicting the handshaking lemma.
\end{proof}

\section*{Declaration on the use of AI}
During the development of the four-color upper bound for $S(n,m)$, we used ChatGPT (OpenAI) to discuss possible approaches to a structural question concerning the orders of monochromatic components containing vertices of high color degree. These discussions contributed to the investigation of the component-order property established in Lemma 3.3 and to the development of its proof.

ChatGPT was also used to improve the language and presentation of the manuscript. The author independently reviewed and verified all mathematical arguments and takes full responsibility for the results and conclusions.

\section*{Conflict of interest}

The author declares no conflict of interest.

\end{document}